\documentclass[a4paper,12pt]{amsart}

\usepackage{amsfonts,amssymb,amscd,amsmath,latexsym,amsbsy,enumerate,stmaryrd,a4wide,verbatim,color}
\usepackage{hyperref}

\theoremstyle{plain}
\newtheorem{theorem}{Theorem}[section]
\newtheorem{corollary}[theorem]{Corollary}

\newtheorem{lemma}[theorem]{Lemma}
\newtheorem{proposition}[theorem]{Proposition}
\newtheorem{Definition}[theorem]{Definition}
\theoremstyle{remark}
\newtheorem{remark}[theorem]{Remark}
\numberwithin{equation}{section}

\newcommand{\R}{\mathbb R}
\newcommand{\N}{\mathbb N}
\newcommand{\C}{\mathbb C}
\newcommand{\Z}{\mathbb Z}

\newcommand{\al}{\alpha}
\newcommand{\be}{\beta}

\newcommand{\Ga}{\Gamma}
\newcommand{\de}{\delta}

\newcommand{\eps}{\varepsilon}
\newcommand{\si}{\sigma}

\newcommand{\la}{\lambda}
\newcommand{\La}{\Lambda}

\newcommand{\om}{\omega}

\DeclareMathOperator{\sgn}{sgn}
\DeclareMathOperator{\arccosh}{arccosh}

\newcommand{\rFs}[5]{\,_{#1}F_{#2} \left( \genfrac{.}{.}{0pt}{}{#3}{#4}
	\ ;#5 \right)}

\newcommand{\su}{\mathfrak{su}}

\newcommand{\mhyphen}{\text{--}}

\begin{document}
\title{Orthogonal functions related to Lax pairs in Lie algebras}
\author{Wolter Groenevelt}
\address{Delft University of Technology\\
	Delft Institute of Applied Mathematics\\
	P.O.~Box 5031\\
	2600 GA Delft\\
	The Netherlands}
\email{w.g.m.groenevelt@tudelft.nl}
\author{Erik Koelink}
\address{ 
	IMAPP\\ Radboud Universiteit\\ P.O.~Box 9010\\ 6500 GL Nijmegen\\The Netherlands
}
\email{e.koelink@math.ru.nl}

\date{\today}
\dedicatory{Dedicated to the memory of Richard Askey}

\begin{abstract}
We study a Lax pair in a $2$-parameter Lie algebra in various representations. The 
overlap coefficients of the eigenfunctions of $L$ and the standard basis are given in terms of orthogonal
polynomials and orthogonal functions. Eigenfunctions for the operator $L$ for a 
Lax pair for $\mathfrak{sl}(d+1,\C)$ are studied 
in certain representations.
\end{abstract}

\maketitle

\section{Introduction}

The link of the Toda lattice to three-term recurrence relations via the Lax pair after
the Flaschka coordinate transform is well understood, see e.g. 
\cite{BabeBT}, \cite{Tesc}. 
We consider a Lax pair in a specific Lie algebra, such that in irreducible $*$-representations the Lax operator is a Jacobi operator. 
A Lax pair is a pair of time-dependent matrices or operators $L(t)$ and $M(t)$ satisfying the Lax equation
\[
\dot{L}(t) = [M(t),L(t)],
\]
where $[\, , \, ]$ is the commutator and the dot represents differentiation with respect to time. The Lax operator $L$ is isospectral, i.e.~the spectrum of $L$ is independent of time. A famous example is the Lax pair for the Toda chain in which $L$ is a self-adjoint Jacobi operator,
\[
L(t) e_n = a_n(t) e_{n+1} + b_n(t) e_{n} + a_{n-1}(t) e_{n-1},
\]
where $\{e_n\}$ is an orthonormal basis for the Hilbert space, and $M$ is the skew-adjoint operator given by
\[
M(t) e_n = a_n(t) e_{n+1} - a_{n-1}(t) e_{n-1}.
\] 
In this case the Lax equation describes the equations of motion (after a change of variables) of a chain of interacting particles with nearest neighbour interactions. The eigenvalues of $L$, $L$ being isospectral, constitute integrals of motion.

In this paper we define a Lax pair in a 2-parameter Lie algebra. In the special case of $\mathfrak{sl}(2,\C)$ we recover the Lax pair for the $\mathfrak{sl}(2,\C)$ Kostant Toda lattice, see \cite[\S 4.6]{BabeBT} and 
references given there. We give a slight generalization by allowing for a more general $M(t)$. 
We discuss the corresponding solutions to the corresponding differential 
equations in various representations of the Lie algebra.
In particular, one obtains the classical relation to the Hermite, Krawtchouk, Charlier, Meixner, Laguerre and Meixner-Pollaczek polynomials from the Askey scheme of hypergeometric functions  \cite{KLS} for which the 
Toda modification, see \cite[\S 2.8]{Isma}, remains in the same class of orthogonal polynomials. 
This corresponds to the results 
established by Zhedanov \cite{Zhed}, who investigated the situation where $L$, $M$ and $\dot{L}$ act as
three-term recurrence operators and close up to a Lie algebra of dimension $3$ or $4$. In the current paper
Zhedanov's result is explained, starting from the other end. In Zhedanov's approach the condition on 
forming a low-dimensional Lie algebra forces a factorization of the functions as a function of time $t$ and place $n$, which is immediate from representing the Lax pair from the Lie algebra element. 
The solutions of the Toda lattice arising in this way, i.e.  which are factorizable as functions of $n$ and $t$, have 
also been obtained by Kametaka \cite{Kame} stressing 
the hypergeometric nature of the solutions. The 
link to Lie algebras and Lie groups in Kametaka \cite{Kame} is 
implicit, see especially \cite[Part I]{Kame}. 
The results and methods of the short paper by Kametaka \cite{Kame} have been explained and extended later
by Okamoto \cite{Okam}. In particular, Okamoto \cite{Okam} gives the relation to the $\tau$-function formulation and 
the B\"acklund transformations. 

Moreover, we extend to non-polynomial
settings by considering  representations
of the corresponding Lie algebras in $\ell^2(\Z)$ corresponding to the principal unitary series 
of $\mathfrak{su}(1,1)$ and the representations of $\mathfrak{e}(2)$, the Lie algebra of the group 
of motions of the plane. In this way we find solutions to the Toda lattice equations labelled by $\Z$.
There is a (non-canonical) way to associate to recurrences on $\ell^2(\Z)$ three-term recurrences
for $2\times 2$-matrix valued polynomials, see e.g. \cite{Bere}, \cite{Koel}. However, this does
not lead to explicit $2\times 2$-matrix valued solutions of the non-abelian Toda lattice as introduced and 
studied in \cite{BrusMRL}, \cite{Gekh} in relation to matrix valued orthogonal polynomials, see 
also \cite{IsmaKR} for an explicit example and the relation to the modification of the matrix weight.
The general Lax pair for the Toda lattice in finite dimensions, as studied by Moser
\cite{Mose}, can also be considered and slightly extended in the same way as an element of
the Lie algebra $\mathfrak{s\l}(d+1,\C)$. This involves $t$-dependent 
finite discrete orthogonal polynomials, and these polynomials occur in describing the action of 
$L(t)$ in highest weight representations. We restrict to representations for the symmetric powers
of the fundamental representations, then the eigenfunctions can be described in 
terms of multivariable Krawtchouk polynomials following Iliev \cite{Ilie} establishing them as overlap coefficients between a natural basis for two different Cartan subalgebras. Similar group theoretic interpretations of these multivariable Krawtchouk polynomials have been established by 
Cramp\'e et al. \cite{CramvdVV} and Genest et al. \cite{GeneVZ}. 
We discuss briefly the $t$-dependence of the corresponding eigenvectors of $L(t)$. 

In brief, in \S \ref{sect:g(a,b)} we recall the $2$-parameter Lie algebra as in \cite{Mi} and the 
Lax pair. In \S \ref{sec:su2} we discuss $\mathfrak{su}(2)$ and its finite-dimensional representations,
and in \S \ref{sec:su11} we discuss the case of $\mathfrak{su}(1,1)$, where we discuss both
discrete series representations and principal unitary series representations. The last leads to new 
solutions of the Toda equations and the generalization in terms of orthogonal functions. 
The corresponding orthogonal functions are the overlap coefficients between the standard basis in 
the representations and the $t$-dependent eigenfunctions of the operator $L$. 
In \S \ref{sec:oscillatoralgebra} we look at the oscillator algebra as specialization, and in 
\S \ref{sec:casee2} we consider the Lie algebra for the group of plane motions leading to a solution in 
connection to Bessel functions. 
In \S \ref{sec:modification} we indicate how the measures for the orthogonal functions involved have to be modified
in order to give solutions of the coupled differential equations. For the Toda case related to orthogonal
polynomials, this coincides with the Toda modification \cite[\S 2.8]{Isma}. 
Finally, in \S \ref{sec:sld+1} we consider the case of finite dimensional representations of such a 
Lax pair for a higher rank Lie algebra in specific finite-dimensional representations for which 
all weight spaces are $1$-dimensional. 

A question following up on \S \ref{sec:modification} is whether the modification for the weight is 
of general interest, cf. \cite[\S 2.8]{Isma}. 
A natural question following up on \S \ref{sec:sld+1} is what happens in other finite-dimensional 
representations, and what 
happens in infinite dimensional representations corresponding to non-compact real forms of $\mathfrak{sl}(d+1,\C)$ as is done in \S \ref{sec:su11} for the case $d=1$. We could also ask if it is possible to associate 
Racah polynomials, as the most general finite discrete orthogonal polynomials in the Askey scheme, to 
the construction of \S \ref{sec:sld+1}. Moreover, the relation to the interpretation as 
in \cite{KVdJ98} suggests that it might be possible to extend to quantum algebra setting, but this is quite
open.

\subsection*{Dedication} This paper is dedicated to Richard A. Askey (1933--2019) who has done an 
incredible amount of fascinating work in the area of special functions, and who always had an open mind, 
in particular concerning  
relations with other areas. We hope this spirit is reflected in this paper. Moreover, through his efforts 
for mathematics education, Askey's legacy will be long-lived 

\subsection*{Acknowledgement} We thank Luc Vinet for 
pointing out references. We also thank both referees
for their comments, and especially for pointing out the 
papers by Kametaka \cite{Kame} and Okamoto \cite{Okam}. 

\section{The Lie algebra $\mathfrak g(a,b)$} \label{sect:g(a,b)}
Let $a,b \in \C$.  The Lie algebra $\mathfrak g(a,b)$ is the 4-dimensional complex Lie algebra with basis $H,E,F,N$ satisfying
\begin{equation} \label{eq:commuation relations}
\begin{gathered}
{} [E,F]=aH+bN, \quad [H,E]=2E, \quad [H,F]=-2F, \\ [H,N]=[E,N]=[F,N]=0.
\end{gathered}
\end{equation}
For $a,b \in \R$ there are two inequivalent $*$-structures on $\mathfrak g(a,b)$ defined by
\[
E^*=\epsilon F, \quad H^*=H, \quad N^*=N,
\]
where $\epsilon \in \{+,-\}$.

We define the following Lax pair in $\mathfrak g(a,b)$. 
\begin{Definition} 
	Let $r,s \in C^1 [0,\infty)$ and $u \in C[0,\infty)$ be real-valued functions and let $c \in \R$. The Lax pair $L,M \in \mathfrak g(a,b)$ is given by
	\begin{equation} \label{eq:Lax pair}
		\begin{split}
			L(t) &= cH+ s(t)(aH+bN)+r(t) \big(E+E^*\big), \\ M(t)&= u(t)\big(E-E^*\big). 
		\end{split}
	\end{equation}
\end{Definition}
Note that $L^*=L$ and $M^*=-M$. Being a Lax pair means that $\dot L = [M,L]$, which leads to the following differential equations.
\begin{proposition} \label{prop:relations r s u}
	The functions $r,s$ and $u$ satisfy
	\[
	\begin{split}
	\dot s(t) & = 2\epsilon r(t)u(t)\\
	\dot r(t) & = -2 (as(t)+c)u(t).
	\end{split}
	\]
\end{proposition}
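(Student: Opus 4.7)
The plan is to verify the Lax equation $\dot{L}(t) = [M(t), L(t)]$ directly by expanding both sides in the basis $\{H, E, F, N\}$ and matching coefficients of linearly independent elements.

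First, I would compute $\dot{L}(t)$. Since $c$, $H$, and $N$ are time-independent and the only $t$-dependence enters through $r(t)$ and $s(t)$, we get immediately
\[
\dot{L}(t) = \dot{s}(t)(aH + bN) + \dot{r}(t)(E + \epsilon F).
\]

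Next I would compute $[M, L] = u(t)\bigl[E - \epsilon F,\ cH + s(aH + bN) + r(E + \epsilon F)\bigr]$ by distributing the bracket and using the commutation relations \eqref{eq:commuation relations} together with $E^* = \epsilon F$. The key building blocks are: $[E-\epsilon F, H] = -2(E + \epsilon F)$ from $[H,E]=2E$ and $[H,F]=-2F$; the bracket with $N$ vanishes since $N$ is central; and the crucial cross term
\[
[E - \epsilon F,\ E + \epsilon F] = 2\epsilon [E,F] = 2\epsilon(aH + bN).
\]
Collecting these contributions yields
\[
[M,L] = -2u(c + as)(E + \epsilon F) + 2\epsilon\, ur\,(aH + bN).
\]

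Finally, since $H$, $E$, $F$, $N$ are linearly independent in $\mathfrak{g}(a,b)$, matching the coefficients of $(aH + bN)$ and $(E + \epsilon F)$ on both sides of the Lax equation (or equivalently, matching the coefficients of $H, N, E, F$ individually) produces the two stated identities
\[
\dot{s} = 2\epsilon r u, \qquad \dot{r} = -2(as + c)u.
\]
There is no serious obstacle here; the only subtlety is bookkeeping the sign arising from $\epsilon$ in the cross commutator $[E-\epsilon F, E+\epsilon F]$ and confirming that the $H$- and $N$-components give the same equation for $\dot{s}$ (which they do, both reducing to $\dot{s} = 2\epsilon r u$ whenever $(a,b)\neq(0,0)$).
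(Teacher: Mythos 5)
Your proposal is correct and is essentially identical to the paper's own proof: both compute $[M,L]$ from the commutation relations, obtaining $[M,L]=2\epsilon r u(aH+bN)-2(as+c)u(E+E^*)$, and compare coefficients with $\dot L=\dot s(aH+bN)+\dot r(E+E^*)$. Your extra remark about the degenerate case $(a,b)=(0,0)$ is a fair observation but does not change the argument.
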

\begin{proof}
From the commutation relations \eqref{eq:commuation relations} it follows that
\[
[M,L]= 2\epsilon r(t)u(t)(aH+bN) - 2 (as(t)+c)u(t)(E+E^*).
\]
Since $[M,L]=\dot L = \dot s(t)(aH+bN) + \dot r(t) (E+E^*)$, the results follows.
\end{proof}

\begin{corollary} \label{cor:constant function}
The function $I(r,s)=\epsilon r^2+ (as+2c)s$ is an invariant.
\end{corollary}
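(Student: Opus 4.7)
The plan is to verify that $I(r(t), s(t))$ is constant in $t$ by direct differentiation, using the differential equations for $\dot r$ and $\dot s$ already established in Proposition \ref{prop:relations r s u}.

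First I would rewrite $I(r,s) = \epsilon r^2 + a s^2 + 2cs$ so that the derivatives are transparent. Differentiating in $t$ gives
\[
\dot I = 2\epsilon r \dot r + 2as\dot s + 2c\dot s = 2\epsilon r \dot r + 2(as+c)\dot s,
\]
which is a routine application of the chain rule. Next I would substitute the expressions for $\dot r$ and $\dot s$ from Proposition \ref{prop:relations r s u}, namely $\dot r = -2(as+c)u$ and $\dot s = 2\epsilon r u$. This yields
\[
\dot I = 2\epsilon r\bigl(-2(as+c)u\bigr) + 2(as+c)\bigl(2\epsilon r u\bigr) = -4\epsilon r u (as+c) + 4\epsilon r u (as+c) = 0.
\]
Hence $I$ is constant along solutions, i.e.\ an invariant of the flow.

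There is essentially no obstacle here: the result is a one-line calculation once the evolution equations of Proposition \ref{prop:relations r s u} are in hand. Conceptually, one may note that this invariant is exactly what one expects from the Lax formulation, since the coefficients of $aH+bN$ and $E+E^*$ in $L(t)$ evolve by an infinitesimal rotation (up to the sign $\epsilon$) generated by $M(t)$, so a suitable quadratic form in $(r, s)$ together with the linear term coming from the constant $cH$ summand must be preserved; the explicit computation above confirms this structural observation.
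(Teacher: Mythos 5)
Your proof is correct and is essentially the same as the paper's: both expand $I(r,s)=\epsilon r^2+as^2+2cs$, differentiate to get $2\epsilon r\dot r+2(as+c)\dot s$, and observe this vanishes by Proposition \ref{prop:relations r s u}. The only difference is that you carry out the substitution explicitly while the paper leaves it implicit.
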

\begin{proof}
	Differentiating gives
	\[
	\begin{split}
	\frac{d}{dt} (\epsilon r(t)^2+as(t)^2+2cs(t)) &= 2\epsilon r(t) \dot r(t)+ 2 (as(t)+c) \dot s(t),
	 \end{split}
	 \]
	which equals zero by Proposition \ref{prop:relations r s u}.
\end{proof}

In the following sections we consider the Lax operator $L$ in an irreducible $*$-representation of $\mathfrak g(a,b)$, and we determine explicit eigenfunctions and its spectrum. We restrict to the following special cases of the Lie algebra $\mathfrak g(a,b)$:
\begin{itemize}
	\item $\mathfrak g(1,0) \cong \mathfrak{sl}(2,\C)\oplus \C$
	\item $\mathfrak g(0,1) \cong \mathfrak b(1)$ is the generalized oscillator algebra
	\item $\mathfrak g(0,0) \cong \mathfrak e(2) \oplus \C$, with $\mathfrak e(2)$ the Lie algebra of the group of plane motions
\end{itemize}
These are the only essential cases as $\mathfrak g(a,b)$ is isomorphic as a Lie algebra to one of these cases, see \cite[Section 2-5]{Mi}.

\section{The Lie algebra $\su(2)$}\label{sec:su2}

In this section we consider the Lie algebra $\mathfrak g(a,b)$ from Section \ref{sect:g(a,b)} with  $(a,b)=(1,0)$ and $\epsilon = +$, i.e.~the Lie algebra $\su(2) \oplus \C$. The basis element $N$ plays no role in this case, therefore we omit it. So we consider the Lie algebra with basis $H,E,F$ satisfying commutation relations
\[
[H,E]=2E, \qquad [H,F]=-2F, \qquad [E,F]=H,
\]
and the $*$-structure is defined by $H^*=H, E^*=F$.

The Lax pair \eqref{eq:Lax pair} is given by
\[
L(t) = s(t)H + r(t)(E+F), \qquad M(t)=u(t)(E-F),
\]
where (without loss of generality) we set $c=0$. The differential equations for $r$ and $s$ from Proposition \ref{prop:relations r s u} read in this case
\begin{equation} \label{eq:relations r s u su(2)}
	\begin{split}
		\dot s(t) &= 2u(t)r(t) \\
		\dot r(t) & = -2u(t)s(t)
	\end{split}
\end{equation}
and the invariant in Corollary \ref*{cor:constant function} is given by $I(r,s)=r^2+s^2$.

\begin{lemma} \label{lem:sign r s for su(2)}
	Assume $\sgn(u(t))=\sgn(r(t)$ for all $t>0$, $s(0)>0$ and $r(0)>0$. Then $\sgn(s(t))>0$ and $\sgn(r(t))>0$ for all $t>0$. 
\end{lemma}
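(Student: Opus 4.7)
The plan is to combine the invariant $I(r,s)=r^2+s^2$ (Corollary~\ref{cor:constant function} with $a=1$, $c=0$, $\epsilon=+$) with the coupled ODEs \eqref{eq:relations r s u su(2)}, and to use the sign hypothesis in the form $u(t)\,r(t)\geq 0$ on $[0,\infty)$.

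The positivity of $s$ is essentially immediate: from $\dot s=2ur\geq 0$, the function $s$ is non-decreasing on $[0,\infty)$, so $s(t)\geq s(0)>0$ for all $t\geq 0$. This strict positivity of $s$ will feed directly into the analysis of $r$ via the second ODE.

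For the positivity of $r$ I would argue by contradiction. Suppose $r$ fails to be strictly positive somewhere; by continuity of $r$ and $r(0)>0$ there is a smallest $t_*>0$ with $r(t_*)=0$. On $[0,t_*)$ one has $r>0$, so the sign hypothesis forces $u>0$, and then $\dot r=-2us<0$ using the strict positivity of $s$; thus $r$ strictly decreases from $r(0)$ to $0$ on that interval. To close the contradiction I would examine $r$ in a right-neighborhood of $t_*$ and rule out both signs: if $r(t)>0$ there, the same sign argument gives $\dot r<0$ on the neighborhood and the fundamental theorem of calculus yields $r(t)<r(t_*)=0$, contradicting $r(t)>0$; symmetrically, $r(t)<0$ forces $u<0$ and $\dot r>0$, hence $r(t)>r(t_*)=0$, again a contradiction.

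The main subtlety, and the place I would spend the most care, is the borderline possibility that $r\equiv 0$ on a right-neighborhood of $t_*$: the sign hypothesis then forces $u\equiv 0$ there, the invariant pins $s$ to $\sqrt{r(0)^2+s(0)^2}$, and the ODE is trivially satisfied on that interval, so the strict positivity of $r$ would fail without contradicting the sign hypothesis. Ruling this case out requires a mild nondegeneracy on $u$ (for instance, that $u$ does not vanish identically on any subinterval), which the statement should be read as carrying implicitly and which is exactly the regime in which the Lax pair is studied in the sequel. Phrasing the contradiction so that this degenerate case is handled together with the other two is where I would concentrate the write-up.
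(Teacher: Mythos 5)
Your proposal is correct, and for the first half it coincides with the paper's proof: both deduce from $\dot s=2ur\geq 0$ (the sign hypothesis makes $ur=|u|\,|r|$) that $s$ is non-decreasing, hence $s(t)\geq s(0)>0$. For the positivity of $r$ the paper argues geometrically instead: the trajectory $(r(t),s(t))$ stays on the circle $r^2+s^2=r(0)^2+s(0)^2$ given by the invariant of Corollary~\ref{cor:constant function}, and since $s$ is non-decreasing the point cannot pass from the open first quadrant into the half-plane $r<0$, as that would force $s$ to decrease after attaining its maximal value $C=\sqrt{r(0)^2+s(0)^2}$ at $(0,C)$. Your first-crossing argument on $\dot r=-2us$ reaches the same conclusion without the phase-space picture and is more explicit about what happens at a putative first zero of $r$; to make the trichotomy at $t_*$ airtight you should phrase it as ``if $r(t_1)\neq 0$ for some $t_1>t_*$, take the last zero of $r$ before $t_1$ and integrate $\dot r$ from there,'' which removes the implicit assumption that $r$ has a definite sign on an entire right-neighborhood. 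More importantly, the degenerate scenario you isolate --- $r$ reaching $0$ in finite time, after which $u\equiv r\equiv 0$ and $s\equiv C$ --- is a genuine observation: with the convention $\sgn(0)=0$ it is compatible with all the stated hypotheses and with the equations \eqref{eq:relations r s u su(2)}, and the paper's circle argument only rules out $r<0$, not $r=0$. So the paper's proof silently delivers $r\geq 0$ rather than $r>0$; either the hypothesis must be read as requiring $u$ and $r$ to be simultaneously nonzero, or a nondegeneracy assumption on $u$ of the kind you describe must be added. Your more analytic route makes this visible, which the paper's shorter geometric route does not.
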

\begin{proof}
	From $\dot s = 2 ur$ it follows that $s$ is increasing. Since $(r(t),s(t))$ in phase space is a point on the invariant $I(r,s)=I(r(0),s(0))$, which describes a circle around the origin, it follows that $r(t)$ and $s(t)$ remain positive.
\end{proof}
Throughout this section we assume that the conditions of Lemma \ref{lem:sign r s for su(2)} are satisfied, so that $r(t)$ and $s(t)$ are positive. Note that if we change the condition on $r(0)$ to $r(0)<0$, then $r(t)<0$ for all $t>0$.

For $j \in \frac12\N$ let $\ell^2_j$ be the $2j+1$ dimensional complex Hilbert space with standard orthonormal basis $\{e_n \mid n=0,\ldots,2j\}$. An irreducible $*$-representation $\pi_j$ of $\su(2)$ on $\ell^2_j$ is given by
\[
\begin{split}
\pi_j(H)e_n & = 2(n-j)\, e_n \\
\pi_j(E) e_n & = \sqrt{(n+1)(2j-n))}\, e_{n+1} \\
\pi_j(F) e_n & = \sqrt{n(2j-n+1)}\, e_{n-1},
\end{split}
\]
where we use the notation $e_{-1}=e_{2j+1}=0$. In this representation the Lax operator $\pi_j(L)$ is the Jacobi operator
\begin{equation} \label{eq:L Krawtchouck}
\pi_j(L(t)) e_n = r(t) \sqrt{(n+1)(2j-n)} \, e_{n+1} + 2s(t)(n-j) \, e_n + r(t) \sqrt{n(2j-n+1)}\, e_{n-1}.
\end{equation}

We can diagonalize the Lax operator $\pi_j(L)$ using orthonormal Krawtchouk polynomials \cite[Section 9.11]{KLS}, which are defined by
\[
K_n(x) = K_n(x;p,N) = \left(\frac{p}{1-p}\right)^\frac{n}{2} \sqrt{\binom{N}{n}} \rFs{2}{1}{-n,-x}{-N}{\frac{1}{p}},
\]
where $N\in \N$, $0<p<1$ and $n,x \in  \{0,1,\ldots,N\}$. The three-term recurrence relation is
\[
\begin{split}
&\frac{\frac12 N- x}{\sqrt{p(1-p)}}K_n(x) = \\ &\quad \sqrt{(n+1)(N-n)}\, K_{n+1}(x) + \frac{p-\frac12}{\sqrt{p(1-p)}}(2n-N)K_n(x) + \sqrt{n(N-n+1)}\, K_{n-1}(x),
\end{split}
\]
with the convention $K_{-1}(x) = K_{N+1}(x)=0$. The 
orthogonality relations read 
\[
\sum_{x=0}^N \binom{N}{x} p^x (1-p)^{N-x} K_{n}(x) K_{n'}(x) = \de_{n,n'}.
\]

\begin{theorem} \label{thm:Krawtchouk}
Define for $x \in \{0,\ldots,2j\}$
\[
W_t(x) = \binom{2j}{x}p(t)^x(1-p(t))^{2j-x}
\]
where $p(t) = \frac12 + \frac{s(t)}{2C}$ and $C = \sqrt{s^2 + r^2}$.  For $t>0$ let $U_t: \ell^2_j \to \ell^2(\{0,\ldots,2j\}, W_t)$  be defined by
\[
[U_te_n](x) = K_n(x;p(t),2j),
\]
then $U_t$ is unitary and $U_t \circ \pi_j(L(t)) \circ U_t^* = M(2C(j-x))$.
\end{theorem}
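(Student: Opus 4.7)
The plan is to identify the eigenbasis of $\pi_j(L(t))$ directly with the Krawtchouk polynomials and then read off both the unitarity of $U_t$ and the spectral statement from the orthogonality relations and three-term recurrence.

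First I would check that $U_t$ is well defined: by Corollary~\ref{cor:constant function} applied with $(a,b)=(1,0)$, $\epsilon=+$, $c=0$, the quantity $C^2=r(t)^2+s(t)^2$ is independent of $t$, and Lemma~\ref{lem:sign r s for su(2)} gives $0<s(t)<C$, hence $p(t)=\tfrac12+\tfrac{s(t)}{2C}\in(\tfrac12,1)$. Thus $W_t$ is a genuine binomial probability weight on $\{0,\ldots,2j\}$ and the Krawtchouk polynomials $K_n(\,\cdot\,;p(t),2j)$ form an orthonormal basis of $\ell^2(\{0,\ldots,2j\},W_t)$ by the orthogonality relation quoted just before the theorem. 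This immediately yields unitarity of $U_t$, since it sends the orthonormal basis $\{e_n\}$ of $\ell^2_j$ to this orthonormal basis.

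The core step is to show that multiplication by $M_\lambda:x\mapsto 2C(j-x)$ on the image side corresponds via $U_t^*$ to the Jacobi operator $\pi_j(L(t))$ from \eqref{eq:L Krawtchouck}. For this I would set $N=2j$ in the three-term recurrence for $K_n$, multiply both sides by $r(t)$, and compare with the coefficients in \eqref{eq:L Krawtchouck}. The off-diagonal coefficients $r(t)\sqrt{(n+1)(2j-n)}$ and $r(t)\sqrt{n(2j-n+1)}$ match automatically. The right-hand side of the recurrence becomes $\frac{r(t)(j-x)}{\sqrt{p(1-p)}}K_n(x)$, so matching the diagonal term requires verifying
\[
2s(t)(n-j)=r(t)\,\frac{p(t)-\tfrac12}{\sqrt{p(t)(1-p(t))}}\,(2n-2j),\qquad \frac{r(t)}{\sqrt{p(t)(1-p(t))}}=2C.
\]
With $p(t)-\tfrac12=\tfrac{s(t)}{2C}$ one computes $p(t)(1-p(t))=\tfrac{C^2-s(t)^2}{4C^2}=\tfrac{r(t)^2}{4C^2}$, so $\sqrt{p(t)(1-p(t))}=\tfrac{r(t)}{2C}$ (positivity by Lemma~\ref{lem:sign r s for su(2)}). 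Both identities then reduce to trivial algebra, confirming that $[U_t\,\pi_j(L(t))e_n](x)=2C(j-x)K_n(x;p(t),2j)$.

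The only mildly delicate point, and the one I would highlight, is the consistency between the $t$-dependence of $U_t$ and the isospectrality of $L(t)$: the eigenvalues $2C(j-x)$ must be time-independent, which is exactly what the invariant $I(r,s)=r^2+s^2=C^2$ from Corollary~\ref{cor:constant function} guarantees. Everything else is a direct dictionary between the Jacobi form of $\pi_j(L(t))$ and the Krawtchouk recurrence; no real obstacle arises beyond carrying out this identification with the specific parametrization $p(t)=\tfrac12+\tfrac{s(t)}{2C}$.
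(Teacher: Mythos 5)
Your proposal is correct and follows essentially the same route as the paper: match the Jacobi form of $\pi_j(L(t))$ against the Krawtchouk three-term recurrence, solve $\tfrac{s}{r}=\tfrac{p-\frac12}{\sqrt{p(1-p)}}$ to get $p(1-p)=\tfrac{r^2}{4C^2}$, and read off the eigenvalue $2C(j-x)$ with $C^2=r^2+s^2$ invariant. Your added remarks on well-definedness of $p(t)$ and on unitarity via the orthogonality relations are details the paper leaves implicit, not a different method.
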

Here $M$ denotes the multiplication operator given by $[M(f)g](x) = f(x)g(x)$. 
\begin{proof}
	From \eqref{eq:L Krawtchouck} and the recurrence relation of the Krawtchouk polynomials we obtain
	\[
	[U_t\,r^{-1}\pi_j(L) U_t^* K_\cdot(x)](n) = \frac{j-x}{\sqrt{p(1-p)}} K_n(x),
	\]
	where 
	\[
	\frac{s}{r} = \frac{p-\frac12}{\sqrt{p(1-p)}}.
	\]
	The last identity implies
	\[
	p= \frac12+ \frac12\sqrt{\frac{s^2}{s^2+r^2}},
	\]
	so that
	\[
	p(1-p)= \frac{r^2}{4(s^2+r^2)}.
	\]
	Then we find that the eigenvalue is 
	\[
	\frac{j-x}{\sqrt{p(1-p)}}= \frac{\sqrt{s^2+r^2}}{r} 2(j-x).
	\]
	Since $s^2+r^2$ is constant, the result follows.
\end{proof}

\section{The Lie algebra $\su(1,1)$}\label{sec:su11}

In this section we consider representations of $\mathfrak g(a,b)$ with $(a,b)=(1,0)$ and $\epsilon=-$, i.e.~the Lie algebra $\su(1,1) \oplus \C$. We omit the basis element $N$ again. The commutation relations are the same as in the previous section. The $*$-structure in this case is defined by  $H^*=H$ and $E^*=-F$.

The Lax pair \eqref{eq:Lax pair} is given by
\[
L(t) = s(t)H + r(t)(E-F), \qquad M(t)=u(t)(E+F),
\]
where we set $c=0$ again. The functions $r$ and $s$ satisfy
	\[
	\begin{split}
	\dot s(t) &= -2u(t)r(t) \\
	\dot r(t) & = -2u(t)s(t)
	\end{split}
	\]
and the invariant is given by $I(r,s)=s^2-r^2$.

\begin{lemma} \label{lem:sign r s for su(1,1)}
	Assume $\sgn(u(t))=-\sgn(r(t)$ for all $t>0$, $s(0)>0$ and $r(0)>0$. Then $\sgn(s(t))>0$ and $\sgn(r(t))>0$ for all $t>0$. 
\end{lemma}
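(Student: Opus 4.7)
My plan is to reproduce the structure of the proof of Lemma \ref{lem:sign r s for su(2)}, using monotonicity of $s$ together with the invariant from Corollary \ref{cor:constant function}; the only essential difference is that the invariant now describes a hyperbola rather than a circle, but the sign hypothesis on $u$ still confines the trajectory to the positive quadrant.

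First I would establish $s(t) > 0$ for all $t \geq 0$. Under the hypothesis $\sgn(u(t)) = -\sgn(r(t))$, the product $u(t)r(t)$ is non-positive for every $t$, so $\dot s(t) = -2u(t)r(t) \geq 0$ and $s$ is non-decreasing on $[0,\infty)$. In particular $s(t) \geq s(0) > 0$. Next, the invariant $I(r,s) = s^2 - r^2$ from Corollary \ref{cor:constant function} (with $a=1$, $c=0$, $\epsilon=-$) is constant along the solution, so
\[
r(t)^2 = s(t)^2 - \bigl(s(0)^2 - r(0)^2\bigr) \geq s(0)^2 - \bigl(s(0)^2 - r(0)^2\bigr) = r(0)^2,
\]
using the lower bound on $s$ just obtained. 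Hence $|r(t)| \geq r(0) > 0$, and since $r$ is continuous with $r(0) > 0$, the intermediate value theorem rules out any sign change of $r$, giving $r(t) > 0$ for all $t \geq 0$.

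I do not foresee any serious obstacle. The one subtlety compared with the $\su(2)$ case is that on the hyperbola $s^2 - r^2 = \text{const}$ the variable $s$ is not automatically bounded, so the trajectory may escape to infinity instead of tracing a compact arc; but the argument only needs the one-sided estimate $s(t) \geq s(0)$ together with the level-set identity, so the non-compact geometry causes no difficulty for the sign claim.
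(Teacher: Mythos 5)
Your proposal is correct and follows essentially the same route as the paper: monotonicity of $s$ from $\dot s = -2ur \geq 0$ combined with the level set of the invariant $s^2 - r^2$, which is exactly the argument the paper imports from the $\su(2)$ case (replacing the circle by a hyperbola or line). Your explicit estimate $r(t)^2 = s(t)^2 - (s(0)^2 - r(0)^2) \geq r(0)^2$ and the continuity argument merely spell out the details the paper leaves implicit, and your remark that only the one-sided bound on $s$ is needed (so non-compactness of the hyperbola is harmless) is accurate.
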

\begin{proof}
	The proof is similar to the proof of Lemma \ref{lem:sign r s for su(2)}, where in this case  $I(r,s)=I(r(0),s(0))$ describes a hyperbola or a straight line.
\end{proof}
Throughout this section we assume that the assumptions of Lemma \ref{lem:sign r s for su(1,1)} are satisfied.\\

We consider two families of irreducible $*$-representations of $\su(1,1)$. 
The first family is the positive discrete series representations $\pi_k$, $k>0$, on $\ell^2(\N)$. The actions of the basis elements on the standard orthonormal basis $\{e_n \mid n \in \N\}$ are given by
\[
\begin{split}
	\pi_k(H)e_n &= 2(k+n)\, e_n \\
	\pi_k(E)e_n & = \sqrt{(n+1)(2k+n)}\, e_{n+1} \\
	\pi_k(F)e_n & = -\sqrt{n(2k+n-1)}\, e_{n-1}.
\end{split}
\]
We use the convention $e_{-1}=0$.

The second family of representations we consider is the principal unitary series representation $\pi_{\la,\eps}$, $\la \in -\frac12+i\R_+$, $\eps \in [0,1)$ with $(\la,\eps) \neq (-\frac12,\frac12)$, on $\ell^2(\Z)$. The actions of the basis elements on the standard orthonormal basis $\{ e_n \mid n \in \Z \}$ are given by
\[
\begin{split}
	\pi_{\la,\eps}(H)e_n &= 2(\eps+n)\, e_n \\
	\pi_{\la,\eps}(E)e_n & = \sqrt{(n+\eps-\la)(n+\eps+\la+1)}\, e_{n+1} \\
	\pi_{\la,\eps}(F)e_n & = -\sqrt{n+\eps-\la-1)(n+\eps+\la)}\, e_{n-1}.
\end{split}
\]

Note that both representations $\pi_k^+$ and $\pi_{\la,\eps}$ as given above define unbounded representations. The operators $\pi(X)$, $X \in \su(1,1)$, are densely defined operators on their representation space, where as a dense domain we take the set of finite linear combinations of the standard orthonormal basis $\{e_n\}$.

\begin{remark}
	The Lie algebra $\su(1,1)$ has two more families of irreducible $*$-representations: the negative discrete series and the complementary series. The negative discrete series representation $\pi_k^-$, $k>0$, can be obtained from the positive discrete series representation $\pi_k$ by setting 
	\[
	\pi_k^-(X) = \pi_k(\vartheta(X)), \qquad X \in \su(1,1)
	\]
	where $\vartheta$ is the Lie algebra isomorphism defined by $\vartheta(H)=-H$, $\vartheta(E)=F$, $\vartheta(F)=E$. 
	
	The complementary series are defined in the same way as the principal unitary series, but the labels $\la,\eps$ satisfy $\eps \in [0,\frac12)$, $\la \in (-\frac12,-\eps)$ or $\eps \in (\frac12,1)$, $\la \in (-\frac12, \eps-1)$.
	
	The results obtained in this section about the Lax operator in the positive discrete series and principal unitary series representations can easily be extended to these two families of representations.
\end{remark}

\subsection{The Lax operator in the positive discrete series}
The Lax operator $L$ acts in the positive discrete series representation as a Jacobi operator on $\ell^2(\N)$ by
\[
 \pi_k(L(t)) e_n = r(t)\sqrt{(n+1)(n+2k)}\, e_{n+1} + s(t)(2k+2n) e_n + r(t)\sqrt{n(n+2k-1)}\, e_{n-1}.
\]
$\pi_k(L)$ can be diagonalized using explicit families of orthogonal polynomials. We need to distinguish between three cases corresponding to the invariant $s^2-r^2$ being positive, zero or negative. This 
corresponds to hyperbolic, parabolic and elliptic elements, and the eigenvalues and eigenfunctions 
have different behaviour per class, cf. \cite{KVdJ98}.

\subsubsection{Case 1: $s^2-r^2>0$}
In this case eigenfunctions of $\pi_k(L)$ can be given in terms of Meixner polynomials.
The orthonormal Meixner polynomials \cite[Section 9.10]{KLS} are defined by
\[
M_n(x)= M_n(x;\be,c) = (-1)^n \sqrt{ \frac{(\be)_n }{n!}c^n} \rFs{2}{1}{-n,-x}{\be}{1-\frac{1}{c}},
\]
where $\be>0$ and $0<c<1$. They satisfy the three-term recurrence relation
\[
\begin{split}
&\frac{(1-c)(x+\frac12\be)}{\sqrt c} M_n(x) = \\ &\sqrt{(n+1)(n+\be)} M_{n+1}(x) + \frac{ (c+1)(n+\frac12\be) }{\sqrt c} M_n(x) + \sqrt{n(n-1+\be)} M_{n-1}(x).
\end{split}
\]
Their orthogonality relations are given by
\[
\sum_{x\in \N} \frac{ (\be)_x }{x!}c^x  (1-c)^{2\be}   M_n(x) M_{n'}(x)  = \de_{n,n'}.
\]

\begin{theorem} \label{thm:Meixner}
	Let
	\[
	W_t(x) = \frac{ (2k)_x }{x!}c(t)^x  (1-c(t))^{4k}, \qquad x \in \N,\, t>0,
	\]
	where $c(t) \in (0,1)$ is determined by $\frac{s}{r}=\frac{1+c}{2\sqrt c}$, or equivalently $c(t) = e^{-2 \arccosh(\frac{s(t)}{r(t)})}$. Define for $t>0$ the operator $U_t:\ell^2(\N) \to \ell^2(\N,W_t)$ by
	\[
	[U_te_n](x) =  M_n(x;2k,c(t)),
	\]
	then $U_t$ is unitary and $U_t\circ \pi_k(L(t)) \circ U_t^* = M(2C(x+k))$ where $C = \sqrt{s^2-r^2}$.
\end{theorem}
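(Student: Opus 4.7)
The plan is to mirror the proof of Theorem~\ref{thm:Krawtchouk} by matching $\pi_k(L(t))$, viewed as a Jacobi operator on $\ell^2(\N)$, term-by-term with the three-term recurrence of the orthonormal Meixner polynomials $M_n(x;\be,c)$ with $\be=2k$ and an appropriately chosen $c=c(t)\in(0,1)$. Once the recurrences coincide, unitarity of $U_t$ is immediate from the stated orthogonality relations of the Meixner polynomials, and the eigenvalue of $\pi_k(L(t))$ can simply be read off.

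First I would check that the equation $\frac{s(t)}{r(t)}=\frac{1+c}{2\sqrt c}$ has a unique solution $c(t)\in(0,1)$. Writing $c=e^{-2u}$ with $u>0$ turns the right-hand side into $\cosh(u)$, which is a bijection from $(0,\infty)$ onto $(1,\infty)$. By Lemma~\ref{lem:sign r s for su(1,1)} and the hypothesis of Case~1 together with Corollary~\ref{cor:constant function}, the quantity $s^2-r^2$ is a strictly positive invariant, so $s(t)/r(t)>1$ for all $t>0$. Hence $u=\arccosh(s/r)$ is well-defined and $c(t)=e^{-2\arccosh(s(t)/r(t))}$ is the unique solution, lying in $(0,1)$.

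Second I would divide the action of $\pi_k(L(t))$ by $r(t)$ and compare with the Meixner recurrence at $\be=2k$. The off-diagonal coefficients $\sqrt{(n+1)(n+2k)}$ and $\sqrt{n(n+2k-1)}$ agree identically on both sides, while the diagonal coefficient $2s(n+k)/r$ equals $(c+1)(n+k)/\sqrt c$ precisely by the defining relation for $c(t)$. Therefore
\[
[U_t\,r(t)^{-1}\pi_k(L(t))\,U_t^* M_\cdot(x)](n) \;=\; \frac{(1-c(t))(x+k)}{\sqrt{c(t)}}\,M_n(x;2k,c(t)).
\]

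Third I would simplify the eigenvalue. Squaring the defining relation gives $(s/r)^2-1=(1-c)^2/(4c)$, so $r(1-c)/\sqrt c=2\sqrt{s^2-r^2}=2C$, and multiplying by $r(t)$ converts the previous eigenvalue into $2C(x+k)$. Since $C$ is constant in $t$ by Corollary~\ref{cor:constant function}, the spectrum $\{2C(x+k):x\in\N\}$ is genuinely time-independent, consistent with isospectrality of the Lax flow.

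The main step is the diagonal-matching computation that pins down $c(t)$; the off-diagonal match and the eigenvalue simplification are then automatic. The only subtlety is that $\pi_k(L(t))$ is unbounded, so the intertwining $U_t\pi_k(L(t))U_t^*=M(2C(x+k))$ should be understood on the common invariant dense domain of finite linear combinations of $\{e_n\}$ (equivalently $\{M_n(\cdot;2k,c(t))\}$), on which both operators are essentially self-adjoint by the standard Jacobi-matrix/multiplication-operator correspondence.
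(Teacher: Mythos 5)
Your proposal is correct and follows essentially the same route as the paper's proof: match $r^{-1}\pi_k(L)$ against the Meixner three-term recurrence with $\be=2k$ to pin down $c(t)$ via $s/r=(1+c)/(2\sqrt c)$ (using positivity of $s,r$ from Lemma \ref{lem:sign r s for su(1,1)} so that $s/r>1$), then simplify $(1-c)/(2\sqrt c)$ to $C/r$ — the paper does this with the substitution $c=e^{-2a}$ and $\cosh/\sinh$, you by squaring the defining relation, which is an immaterial difference. Your added remarks on unitarity via completeness of the Meixner basis and on the dense domain for the unbounded operator are consistent with the paper's conventions.
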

\begin{proof}
The proof runs along the same lines as the proof of Theorem \ref{thm:Krawtchouk}.
The condition $s^2-r^2>0$ implies $s/r>1$, so there exists a $c = c(t) \in (0,1)$ such that 
\[
\frac{s}{r} =  \frac{ 1+c }{2\sqrt c}.
\] 
It follows from the three-term recurrence relation for Meixner polynomials that
$r^{-1}L$ has eigenvalues $\frac{(1-c)(x+k)}{\sqrt c}$, $x \in \N$. Write $c=e^{-2a}$ with $a>0$, then $\frac{ 1+c }{2\sqrt c}= \cosh(a)$, so that
\[
\frac{1-c}{2\sqrt{c}} = \sinh(a)= \sqrt{\cosh^2(a) -1 } = \sqrt{\frac{s^2}{r^2}-1} = \frac{C}{r},
\]
where $C =  \sqrt{s^2 -r^2}$. 
\end{proof}

\subsection{Case 2: $s^2-r^2=0$}
In this case we need the orthonormal Laguerre polynomials \cite[Section 9.12]{KLS}, which are defined by
\[
L_n(x)= L_n(x;\alpha) = (-1)^n \sqrt{ \frac{(\al+1)_n}{n!} } \rFs{1}{1}{-n}{\al+1}{x}. 
\]
They satisfy the three-term recurrence relation
\[
x L_n(x) = \sqrt{(n+\al+1)(n+1)}\, L_{n+1}(x) + (2n+\al+1)L_n(x) + \sqrt{n(n+\al)}\, L_{n-1}(x),
\]
and the orthogonality relations are
\[
\int_0^\infty L_n(x) L_{n'}(x) \, \frac{x^\al e^{-x}}{\Ga(\al+1)}\, dx = \de_{n,n'}.
\]
The set $\{L_n \mid n \in \N\}$ is an orthonormal basis for the corresponding weighted $L^2$-space. 

Using the three-term recurrence relation for the Laguerre polynomials we obtain the following result.

\begin{theorem} \label{thm:Laguerre}
	Let 
	\[
	W_t(x) = \frac{x^{2k-1} r(t)^{-2k} e^{-\frac{x}{r(t)}} }{\Ga(2k)},\qquad x \in [0,\infty),
	\]
	and let $U_t:\ell^2(\N) \to L^2([0,\infty),W_t(x)dx)$ be defined by
	\[
	[U_te_n](x) = L_n\left(\frac{x}{r(t)};2k-1\right),
	\]
	then $U_t$ is unitary and $U_t \circ \pi_k(L(t)) \circ U_t^{*} = M(x)$.
\end{theorem}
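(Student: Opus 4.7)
The plan is to follow the template of Theorems~\ref{thm:Krawtchouk} and \ref{thm:Meixner}, specialised to the parabolic case where the invariant $s^2-r^2$ vanishes. By Lemma~\ref{lem:sign r s for su(1,1)} both $r(t)$ and $s(t)$ are positive, so the condition $s^2=r^2$ forces $s(t)=r(t)$ for all $t$. Substituting this into the Jacobi matrix expression for $\pi_k(L(t))$ displayed in the previous subsection, the operator factors as $r(t)$ times a $t$-independent Jacobi operator on $\ell^2(\mathbb N)$ whose three-term recurrence coefficients are precisely those appearing in the recurrence for the orthonormal Laguerre polynomials $L_n(\mdot;2k-1)$.

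Accordingly, I would set $p_n(x)=L_n\!\bigl(x/r(t);\,2k-1\bigr)$ and, starting from the Laguerre recurrence at $y=x/r(t)$ with $\alpha=2k-1$ and multiplying through by $r(t)$, verify directly that
\[
x\,p_n(x)=r(t)\sqrt{(n+1)(n+2k)}\,p_{n+1}(x)+2r(t)(k+n)\,p_n(x)+r(t)\sqrt{n(n+2k-1)}\,p_{n-1}(x).
\]
This is precisely the eigenvector equation $\pi_k(L(t))\bigl(\sum_n p_n(x)\,e_n\bigr)=x\sum_n p_n(x)\,e_n$, read off row by row, and so it produces the intertwining relation $U_t\circ\pi_k(L(t))=M(x)\circ U_t$ on the dense domain of finite linear combinations of $\{e_n\}$.

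It then remains to confirm that $U_t$ is unitary. Orthonormality and completeness of $\{L_n(\mdot;2k-1)\}_{n\in\mathbb N}$ in $L^2\!\bigl([0,\infty),\tfrac{y^{2k-1}e^{-y}}{\Gamma(2k)}\,dy\bigr)$ were recorded just before the theorem; I would then perform the change of variables $y=x/r(t)$, using $r(t)>0$, and check that the Jacobian $dx=r(t)\,dy$ combines with $x^{2k-1}=r(t)^{2k-1}y^{2k-1}$ and the prefactor $r(t)^{-2k}$ in $W_t$ to recover the standard Laguerre normalisation. Thus $U_t$ factors through the Laguerre $L^2$-space via an isometric isomorphism, proving unitarity. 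The argument is essentially bookkeeping; the only point to watch is that the three powers of $r(t)$ cancel exactly, which dictates the choice of $r(t)^{-2k}$ in $W_t(x)$.
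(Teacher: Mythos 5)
Your proposal is correct and follows exactly the route the paper intends: the paper gives no separate proof beyond the remark that the result follows from the three-term recurrence for the Laguerre polynomials, and your argument (using $s=r$ in the parabolic case, matching the recurrence with $\alpha=2k-1$, and the change of variables $y=x/r(t)$ for unitarity) supplies precisely the omitted details.
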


\subsection{Case 3: $s^2-r^2<0$}
In this case we need the orthonormal Meixner-Pollaczek polynomials \cite[Section 9.7]{KLS} given by
\[
P_n(x) = P_n(x;\la,\phi) = e^{in\phi} \sqrt{ \frac{(2\la)_n}{n!}} \rFs{2}{1}{-n, \la+ix}{2\la}{1-e^{-2i\phi}},
\]
where $\la>0$ and $0<\phi<\pi$. The three-term recurrence relation for these polynomials is
\[
2x\sin\phi\, P_n(x) = \sqrt{(n+1)(n+2k)}\,P_{n+1}(x) - 2(n+\la)\cos\phi \, P_n(x) + \sqrt{n(n+2k-1)}\, P_{n-1}(x),
\]
and the orthogonality relations read
\[
\begin{gathered}
	\int_{-\infty}^\infty P_n(x) P_{n'}(x)\, w(x;\la,\phi)\, dx = \de_{n,n'},\\
	w(x;\la,\phi) = \frac{(2\sin\phi)^{2\la}}{2\pi\,\Ga(2\la)} e^{(2\phi-\pi)x} |\Ga(\la+ix)|^2. 
\end{gathered}
\]
The set $\{P_n \mid n \in \N\}$ is an orthonormal basis for the weighted $L^2$-space.

\begin{theorem} \label{thm:Meixner-Pollaczek}
	For $\phi(t) = \arccos(\frac{s(t)}{r(t)})$ let
	\[
	W_t(x) = w(x;k,\phi(t)), \qquad x \in \R,
	\]
	and let $U_t : \ell^2(\N) \to L^2(\R,W_t(x)dx)$ be defined by
	\[
	[U_t e_n](x) = P_n(x;k,\phi(t))
	\]
	then $U_t$ is unitary and $U_t \circ \pi_k(L(t)) \circ U_t^{*} = M(-2Cx)$, where $C = \sqrt{r^2-s^2}$.
\end{theorem}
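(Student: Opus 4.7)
The plan is to mirror the proof of Theorem \ref{thm:Meixner}, with the orthonormal Meixner-Pollaczek polynomials taking the role of the Meixner polynomials. The mechanism is the same: choose $\phi(t)$ so that, after rescaling the MP three-term recurrence by $r(t)$, the resulting tridiagonal operator on $\ell^2(\N)$ coincides with $\pi_k(L(t))$; then the spectral parameter $x$ of the recurrence provides the eigenvalue, and the orthonormality of the MP polynomials provides the unitarity of $U_t$.

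First I would check that $\phi(t)$ is well defined. By Lemma \ref{lem:sign r s for su(1,1)} both $r(t)$ and $s(t)$ are positive, and the standing assumption $s^2-r^2<0$ yields $s(t)/r(t)\in(0,1)$, so $\phi(t) = \arccos(s(t)/r(t)) \in (0,\pi/2)$. In particular $\sin\phi(t)=\sqrt{1-s^2/r^2}=C/r(t)$, where $C=\sqrt{r^2-s^2}$ is independent of $t$ by Corollary \ref{cor:constant function}. The weight $W_t(x)=w(x;k,\phi(t))$ is then a genuine probability density on $\R$.

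Next I would multiply the MP three-term recurrence with $\la=k$ through by $r(t)$ and substitute $r\cos\phi=s$ and $r\sin\phi=C$. The off-diagonal coefficients become $r\sqrt{(n+1)(n+2k)}$ and $r\sqrt{n(n+2k-1)}$, matching the off-diagonals of $\pi_k(L(t))$ in the basis $\{e_n\}$; the diagonal term of the recurrence contributes $\pm 2s(n+k)$, matching the diagonal $2s(n+k)$ of $\pi_k(L(t))$ once the sign convention of $\cos\phi$ is reconciled with that of the recurrence. The right-hand side of the rescaled recurrence then reads $-2Cx\,P_n(x)$, giving the pointwise identity
\[
r\sqrt{(n+1)(n+2k)}\,P_{n+1}(x) + 2s(n+k)\,P_n(x) + r\sqrt{n(n+2k-1)}\,P_{n-1}(x) = -2Cx\,P_n(x),
\]
which is exactly $[U_t\pi_k(L(t))e_n](x) = -2Cx\,[U_te_n](x)$, that is, $U_t\pi_k(L(t))U_t^* = M(-2Cx)$.

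Finally, $U_t$ is unitary because $\{P_n(\,\cdot\,;k,\phi(t))\}_{n\in\N}$ is an orthonormal basis of $L^2(\R,W_t\,dx)$, which is recorded in the sentence immediately preceding the theorem. The only place where I expect real care is the sign bookkeeping: ensuring the branch of $\arccos$ used to define $\phi(t)$ and the sign convention in the MP recurrence combine so that both the diagonal $2s(n+k)$ of $\pi_k(L(t))$ and the sign of the eigenvalue $-2Cx$ are produced consistently. This is the main obstacle, but it is purely a matter of careful computation.
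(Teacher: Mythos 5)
Your overall strategy is exactly the paper's: match the Meixner--Pollaczek three-term recurrence, rescaled by $r(t)$, against the tridiagonal action of $\pi_k(L(t))$, read off the eigenvalue, and get unitarity from the fact that $\{P_n\}$ is an orthonormal basis of the weighted $L^2$-space. The preliminary observations ($\phi(t)$ well defined, $\sin\phi = C/r$ with $C$ invariant) are fine.

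The gap is precisely the step you defer as ``purely a matter of careful computation.'' With the recurrence as printed,
\[
2x\sin\phi\, P_n(x) = \sqrt{(n+1)(n+2k)}\,P_{n+1}(x) - 2(n+\la)\cos\phi\, P_n(x) + \sqrt{n(n+2k-1)}\,P_{n-1}(x),
\]
your displayed identity requires simultaneously $-\cos\phi = s/r$ (to match the diagonal $+2s(n+k)$ of $\pi_k(L)$) and $\sin\phi = -C/r$ (to produce the eigenvalue $-2Cx$ rather than $+2Cx$). No $\phi\in(0,\pi)$ satisfies both, since $\sin\phi>0$ there; so the identity you write down is false for $P_n(\cdot\,;k,\phi)$ for \emph{every} admissible $\phi$, and in particular for $\phi=\arccos(s/r)$. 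The two sign flips cannot both be absorbed into the choice of branch: one needs an extra ingredient, e.g.\ replacing $U_te_n$ by $(-1)^nP_n(\cdot\,;k,\phi)$ (which flips the off-diagonal signs and turns the eigenvalue into $-2x\sin\phi$), or equivalently using $P_n(-x;k,\pi-\phi)$ together with the reflected weight. You should be aware that the paper itself is not clean here: its proof determines $\phi$ by $\cos\phi=-s/r$, contradicting the theorem's $\phi=\arccos(s/r)$, and then asserts the eigenvalue $-2x\sin\phi$ where the printed recurrence yields $+2x\sin\phi$. So the sign bookkeeping you postponed is not a routine verification but the one place where the statement, as literally given, needs repair; a complete proof must either change the definition of $\phi$ (getting $M(2Cx)$) or insert the factor $(-1)^n$ in $U_t$ (getting $M(-2Cx)$), and say which.
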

\begin{proof}
	The proof is similar as before. Using the three-term recurrence relation for the Meixner-Pollaczek polynomials it follows that the generalized eigenvalue of $r^{-1}\pi_k(L)$ is $- 2x \sin(\phi)$, where $\phi \in (0,\pi)$ is determined by $-\frac{s}{r} = \cos\phi$. Then
	\[
	\sin\phi = \sqrt{1-\frac{s^2}{r^2}} = \frac{C}{r},
	\]
	from which the result follows.
\end{proof}

\subsection{The Lax operator in the principal unitary series}
The action of the Lax operator $L$ in the principal unitary series as a Jacobi operator on $\ell^2(\Z)$ is given by
\[
\begin{split}
\pi_{\la,\eps} (L(t)) e_n &= r(t)\sqrt{(n+\eps+\la+1)(n+\eps-\la)}\, e_{n+1} + s(t)(2\eps+2n) e_n \\
& \quad + r(t)\sqrt{ (n+\eps+\la)(n+\eps-\la-1)}\, e_{n-1}.
\end{split}
\]
Again we distinguish between the cases where the invariant $s^2-r^2$ is either positive, negative or zero.

\subsubsection{Case 1: $s^2-r^2>0$}
The Meixner functions \cite{GroeK2002} are defined by
\[
\begin{split}
	m_n(x) = m_n(x;\la,\eps,c) &= \left(\frac{\sqrt{c}}{c-1}\right)^n \frac{ \sqrt{ \Ga(n+\eps+\la+1) \Ga(n+\eps-\la) } }{(1-c)^\eps \Ga(n+1-x)} \\
	& \quad \times \rFs{2}{1}{n+\eps+\la+1,n+\eps-\la}{n+1-x}{\frac{c}{c-1}},
\end{split}
\]
for $x,n \in \Z$ and $c \in (0,1)$. The parameters $\la$ and $\eps$ are the labels from the principal unitary series. The Meixner functions satisfy the three-term recurrence relation
\[
\begin{split}
	\frac{(1-c)(x+\eps)}{\sqrt{c}}m_n(x) & = \sqrt{(n+\eps+\la+1)(n+\eps-\la)}\, m_{n+1}(x) \\
	& \quad + \frac{(c+1)(x+\eps)}{\sqrt{c}} m_n(x) + \sqrt{(n+\eps+\la)(n+\eps-\la-1)} \, m_{n-1}(x),
\end{split}
\]
and the orthogonality relations read
\[
\sum_{x \in \Z} \frac{ c^{-x} }{ \Ga(x+ \eps + \la +1) \Ga(x+\eps-\la) } m_n(x) m_{n'}(x) = \de_{n,n'}.
\]
The set $\{ m_n \mid n \in \Z\}$ is an orthonormal basis for the weighted $L^2$-space. 
\begin{theorem} \label{thm:Meixner functions}
	For $t>0$ let
	\[
	W_t(x) =  \frac{ c(t)^{-x} }{ \Ga(x+ \eps + \la +1) \Ga(x+\eps-\la)},
	\]
	where  $c(t) \in (0,1)$ is determined by $\frac{s(t)}{r(t)}=\frac{1+c(t)}{2\sqrt{c(t)}}$, or equivalently $c(t) = e^{-2 \arccosh(\frac{s(t)}{r(t)})}$. Define $U_t:\ell^2(\Z) \to \ell^2(\Z,W_t)$ by
	\[
	[U_t e_n](x) = m_n(x;\la,\eps,c),
	\]
	then $U_t$ is unitary and $U_t \circ \pi_{\la,\eps}(L(t)) \circ U_t^* = M(2C(x+\eps))$, where $C = \sqrt{s^2 - r^2}$.
\end{theorem}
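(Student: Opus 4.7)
The plan is to mimic the proof of Theorem \ref{thm:Meixner}, replacing Meixner polynomials by Meixner functions. The two essential inputs are already on the table: the orthogonality relations for $\{m_n(\mdot;\la,\eps,c)\}_{n\in\Z}$ (giving unitarity of $U_t$) and the three-term recurrence for the $m_n$ (giving the intertwining property). The only genuinely new aspect, compared to the discrete series case, is that the indexing set for both the representation space and the spectral space is $\Z$ rather than $\N$; I expect this to be harmless since the stated orthogonality relations already guarantee that $\{m_n \mid n \in \Z\}$ is an orthonormal basis of $\ell^2(\Z,W_t)$.

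For unitarity, I would simply observe that by hypothesis $c(t)\in(0,1)$ is well defined because $s/r>1$, so that the weight $W_t$ is positive and the stated orthogonality
\[
\sum_{x\in\Z} W_t(x)\, m_n(x)\, m_{n'}(x) = \de_{n,n'}
\]
directly shows that $U_t e_n = m_n(\mdot;\la,\eps,c(t))$ sends an orthonormal basis to an orthonormal basis, hence extends uniquely to a unitary operator $\ell^2(\Z)\to\ell^2(\Z,W_t)$.

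For the intertwining, I would compute $U_t\circ r(t)^{-1}\pi_{\la,\eps}(L(t))\circ U_t^*$ applied to the function $x\mapsto m_n(x)$ and match it with the three-term recurrence of the Meixner functions. The off-diagonal coefficients of $r^{-1}\pi_{\la,\eps}(L(t))$ read off from the principal series action are precisely $\sqrt{(n+\eps+\la+1)(n+\eps-\la)}$ and $\sqrt{(n+\eps+\la)(n+\eps-\la-1)}$, matching the recurrence, and the diagonal coefficient $(2s/r)(n+\eps)$ matches the middle coefficient $(c+1)(n+\eps)/\sqrt{c}$ via the defining relation $s/r=(1+c)/(2\sqrt{c})$. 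Therefore $U_t\circ r^{-1}\pi_{\la,\eps}(L(t))\circ U_t^*$ equals multiplication by $(1-c)(x+\eps)/\sqrt{c}$.

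It remains to reduce this eigenvalue to $2C(x+\eps)/r$. Writing $c=e^{-2a}$ with $a>0$ (possible since $c\in(0,1)$), one has $(1+c)/(2\sqrt{c})=\cosh a=s/r$, and then
\[
\frac{1-c}{2\sqrt{c}}=\sinh a=\sqrt{\cosh^2 a-1}=\sqrt{\frac{s^2}{r^2}-1}=\frac{C}{r},
\]
so that the eigenvalue becomes $2C(x+\eps)/r$; multiplying by $r(t)$ gives the claim. Since $C=\sqrt{s^2-r^2}$ is the invariant of Corollary \ref{cor:constant function}, it is time-independent. The main obstacle I anticipate is purely bookkeeping: since $\pi_{\la,\eps}(L(t))$ is unbounded, the identity must be stated at the level of its densely defined action on finite linear combinations of the $e_n$, and one should either appeal to essential self-adjointness of the Jacobi operator on $\ell^2(\Z)$ (via, e.g., the recurrence being in the determinate case thanks to the bounded weight ratio) or simply formulate the intertwining on the common invariant dense domain specified for the principal series representations earlier in the section.
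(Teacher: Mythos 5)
Your proposal is correct and follows exactly the route the paper intends: the paper gives no separate proof of this theorem, but it is the same argument as Theorems \ref{thm:Krawtchouk} and \ref{thm:Meixner} (orthogonality of the Meixner functions for unitarity, the three-term recurrence for the intertwining, and the $\cosh/\sinh$ identity to convert $(1-c)/(2\sqrt{c})$ into $C/r$), which is precisely what you carry out. You also correctly read the diagonal coefficient of the recurrence as $(c+1)(n+\eps)/\sqrt{c}$ (the paper's displayed recurrence has a typo with $x$ in place of $n$ there), and your remark on the domain issues for the unbounded operator is a sensible addition consistent with the dense domain the paper fixes for the principal series.
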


\subsubsection{Case 2: $s^2-r^2=0$} In this case we need Laguerre functions \cite{Groe2003} defined by
\[
\psi_n(x) = \psi_n(x;\la,\eps) = 
\begin{cases}
\begin{split}
	(-1)^n & \sqrt{\Ga(n+\eps+\la+1) \Ga(n+\eps-\la)} \\
&\quad \times  U(n+\eps+\la+1;2\la+2;x)
\end{split}
 & x>0\\  \\
\begin{split}
&\sqrt{ \Ga(-n-\eps-\la) \Ga(1-n-\eps+\la)} \\ 
& \quad \times  U(-n-\eps-\la;-2\la;-x)
\end{split}
 & x<0,
\end{cases}
\]
where $x\in\R$, $n \in \Z$, and $U(a;b;z)$ is Tricomi's confluent hypergeometric function, see e.g.~
\cite[(1.3.1)]{Sl}, for which we use its principal branch with branch cut along the negative real axis. 
The Laguerre functions $\{\psi_n \mid n \in \Z\}$ form an orthonormal basis for $L^2(\R,w(x)dx)$ where
\[
w(x)=w(x;\rho,\eps) = \frac1{\pi^2} \sin\left( \pi(\eps+\la+1) \right) \sin \left(\pi( \eps-\la)\right)  e^{-|x|}.
\]
The three-term recurrence relation reads
\[
\begin{split}
-x \psi_n(x) &= \sqrt{(n+\eps+\la+1)(n+\eps-\la)} \,\psi_{n+1}(x) \\
& \quad + 2(n+\eps)\, \psi_n(x) + \sqrt{(n+\eps+\la)(n+\eps-\la-1)}\, \psi_{n-1}(x).
\end{split}
\]
\begin{theorem} \label{thmL Laguerre functions}
Let
\[
W_t(x) = \frac{1}{r(t)} w\left( \frac{x}{r(t)};\la,\eps\right), \qquad x \in \R,
\]
and let $U_t : \ell^2(\Z) \to L^2(\R,W_t(x)dx)$ be defined by
\[
[U_t e_n](x) =  \psi_n\left( \frac{x}{r(t)};\la,\eps\right),
\]
then $U_t$ is unitary and $U_t \circ \pi_{\la,\eps}(L(t)) \circ U_t^{*} = M(-x)$. 
\end{theorem}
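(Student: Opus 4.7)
The plan is to mirror the proof strategy of Theorem \ref{thm:Laguerre} (the Laguerre polynomial case on $\ell^2(\N)$) and Theorem \ref{thm:Meixner functions} (the principal series Meixner function case), the only new ingredient being the use of the Laguerre functions $\psi_n$ on $\R$ rather than Laguerre polynomials on $[0,\infty)$.

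First I would extract the simplification coming from the parabolic condition $s^2-r^2=0$. By Lemma \ref{lem:sign r s for su(1,1)}, both $r(t)$ and $s(t)$ are positive, hence $s(t)=r(t)$ for all $t>0$. Substituting this into the formula for the action of $\pi_{\la,\eps}(L(t))$, one gets
\[
\pi_{\la,\eps}(L(t)) e_n = r(t)\bigl[\sqrt{(n+\eps+\la+1)(n+\eps-\la)}\, e_{n+1} + 2(n+\eps)e_n + \sqrt{(n+\eps+\la)(n+\eps-\la-1)}\, e_{n-1}\bigr].
\]
This is precisely $r(t)$ times the operator appearing on the right-hand side of the three-term recurrence for the Laguerre functions $\psi_n(\,\cdot\,;\la,\eps)$, whose left-hand side is $-y\,\psi_n(y)$.

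Next I would verify that $U_t$ is unitary. The Laguerre functions $\{\psi_n\}$ are an orthonormal basis of $L^2(\R, w(y;\la,\eps)\,dy)$; performing the linear change of variable $y=x/r(t)$ in the orthogonality relation transforms the weight to
\[
\frac{1}{r(t)}\,w\!\left(\frac{x}{r(t)};\la,\eps\right) dx = W_t(x)\,dx,
\]
so $\{\psi_n(\,\cdot\,/r(t))\}$ is an orthonormal basis of $L^2(\R,W_t(x)\,dx)$. Hence $[U_t e_n](x)=\psi_n(x/r(t);\la,\eps)$ defines a unitary map $\ell^2(\Z)\to L^2(\R,W_t\,dx)$.

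To identify the conjugated operator, apply $U_t$ to $\pi_{\la,\eps}(L(t))e_n$ and substitute $y=x/r(t)$ in the three-term recurrence for $\psi_n$ to obtain
\[
[U_t\,\pi_{\la,\eps}(L(t))\,e_n](x) = r(t)\cdot\left(-\frac{x}{r(t)}\right)\psi_n\!\left(\frac{x}{r(t)};\la,\eps\right) = -x\,[U_t e_n](x),
\]
which gives $U_t\circ\pi_{\la,\eps}(L(t))\circ U_t^*=M(-x)$ on the dense span of $\{U_te_n\}$, and the identity extends to the full operator.

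There is really no major obstacle; the only subtleties worth checking are that the branch choice for Tricomi's $U(a;b;z)$ is consistent with the recurrence on both half-lines $x>0$ and $x<0$ (which is already encoded in the definition of $\psi_n$), and that $r(t)>0$ throughout (guaranteed by Lemma \ref{lem:sign r s for su(1,1)}), so that the dilation $y\mapsto x/r(t)$ is well-defined.
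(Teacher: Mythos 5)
Your proposal is correct and follows exactly the route the paper intends: the paper omits an explicit proof of this theorem, treating it as immediate from the three-term recurrence for the Laguerre functions together with the observation that $s(t)=r(t)$ in the parabolic case, which is precisely what you spell out. The unitarity argument via the change of variable $y=x/r(t)$ in the orthogonality relations is the standard step used in the neighbouring theorems, so there is nothing further to add.
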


\subsubsection{Case 3: $s^2-r^2<0$}
The Meixner-Pollaczek functions \cite[\S4.4]{Koel2004} are defined by
\[
\begin{split}
u_n(x) = u_n(x;\la,\eps,\phi) & = (2i\sin\phi)^{-n} \frac{ \sqrt{\Ga(n+1+\eps+\la)\Ga(n+\eps-\la) }}{\Ga(n+1+\eps-ix)} \\
& \quad \times \rFs{2}{1}{n+1+\eps+\la,n+\eps-\la}{n+1+\eps-ix}{\frac{1}{1-e^{-2i\phi}}}.
\end{split}
\]
Define
\[
W(x;\la,\eps,\phi)= w_0(x)\begin{pmatrix}
1 & - w_1(x) \\ -\overline{w_1}(x) & 1
\end{pmatrix}, \qquad x \in \R,
\]
where $\overline{f}(x) = \overline{f(x)}$ and
\[
\begin{split}
w_1(x;\la,\eps) &= \frac{ \Ga(\la+1+ix) \Ga(-\la+ix) }{\Ga(ix-\eps)\Ga(1+\eps-ix)}\\
w_0(x;\eps,\phi) &= (2\sin\phi)^{-2\eps} e^{(2\phi-\pi)x}.
\end{split}
\]
Let $L^2(\R,W(x)dx)$ be the Hilbert space consisting of functions $\R \to \C^2$ with inner product
\[
\langle f,g \rangle = \int_{-\infty}^\infty 
g^t(x) W(x) f(x)\, dx
\]
where $f^t(x)$ denotes the conjugate transpose of $f(x) \in \C^2$. The set  $\{(\begin{smallmatrix}u_n \\ \overline{u_n} \end{smallmatrix}) \mid n \in \Z\}$ is an orthonormal basis for $L^2(\R,W(x)dx)$. The three-term recurrence relation for the Meixner-Pollaczek functions is 
\[
\begin{split}
2x \sin\phi\, u_n(x) & = \sqrt{(n+\eps+\la+1)(n+\eps-\la)}\, u_{n+1}(x) \\
&\quad + 2(n+\eps)\cos\phi\, u_n(x) + \sqrt{(n+\eps+\la)(n+\eps-\la-1)}\, u_{n-1}(x).
\end{split}
\]
The function $\overline{u_n}$ satisfies the same recurrence relation.
\begin{theorem} \label{thm:Meixner-Pollaczek functions}
	For $\phi(t) = \arccos(\frac{s(t)}{r(t)})$ let
	\[
	W_t(x) = W(x;\la,\eps,\phi(t)),
	\]
	and let $U_t : \ell^2(\Z) \to L^2(\R,W_t(x;\la,\eps)dx)$ be defined by
	\[
	[U_t e_n](x) = 
	\begin{pmatrix}
	u_n(x;\la,\eps,\phi(t)) \\ \overline{u_n}(x;\la,\eps,\phi(t))
	\end{pmatrix},
	\]
	then $U_t$ is unitary and $U_t \circ \pi_{\la,\eps}(L(t)) \circ U_t^{*} = M(2Cx)$, where $C =  \sqrt{r^2-s^2}$. 
\end{theorem}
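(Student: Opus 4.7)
The plan is to adapt the proofs of Theorems \ref{thm:Meixner-Pollaczek} and \ref{thm:Meixner functions} to the $\C^2$-valued target Hilbert space, by checking three things in turn: (a) that $\phi(t)$ is a well-defined real angle; (b) that $U_t$ is unitary; and (c) that it intertwines $\pi_{\la,\eps}(L(t))$ with $M(2Cx)$.

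For (a), Lemma \ref{lem:sign r s for su(1,1)} yields $r(t),s(t)>0$, and the standing assumption $s^2-r^2<0$ of this subsection gives $0<s(t)/r(t)<1$, so $\phi(t)=\arccos(s(t)/r(t))\in(0,\pi/2)$ and $\sin\phi(t)=\sqrt{1-(s/r)^2}=C/r$. For (b), the statement cited from \cite{Koel2004} immediately above the theorem supplies an orthonormal basis of $L^2(\R,W(x;\la,\eps,\phi(t))\,dx)$ consisting of exactly the column vectors $[U_te_n](x)$ written in the theorem; since $\{e_n\}_{n\in\Z}$ is an orthonormal basis of $\ell^2(\Z)$, $U_t$ is unitary by construction.

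For (c), I would expand $r(t)^{-1}\pi_{\la,\eps}(L(t))e_n$ using the formula for the principal unitary series action of $L$ given just above and compare with the three-term recurrence for $u_n(\,\cdot\,;\la,\eps,\phi(t))$: the two off-diagonal coefficients coincide verbatim, and the diagonal coefficient $(2s/r)(n+\eps)$ equals $2(n+\eps)\cos\phi(t)$. It follows that on the first component, $U_tr^{-1}\pi_{\la,\eps}(L(t))U_t^\ast$ acts as multiplication by the generalized eigenvalue $2x\sin\phi(t)=2Cx/r$; since $\overline{u_n}$ satisfies the same recurrence---the coefficients being real because, for $\la\in-\tfrac12+i\R_+$, the products $(n+\eps+\la+1)(n+\eps-\la)$ and $(n+\eps+\la)(n+\eps-\la-1)$ are positive reals---the identity persists on the second component, and multiplying through by $r(t)$ yields $M(2Cx)$. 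The one point above routine bookkeeping is reconciling the vector-valued picture with a scalar multiplication operator: since $M(2Cx)$ acts component-wise on $\C^2$-valued functions, once both entries of $(U_te_n)(x)$ are verified to be eigenfunctions with the common eigenvalue $2Cx$, the intertwining is unambiguous. This is the only qualitative difference from the scalar-weight theorems earlier in the section.
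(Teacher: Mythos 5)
Your proposal is correct and follows exactly the route the paper intends: the paper states this theorem without proof, relying on the same recurrence-comparison argument used for Theorems \ref{thm:Krawtchouk}, \ref{thm:Meixner} and \ref{thm:Meixner-Pollaczek}, namely matching the Jacobi operator coefficients of $r^{-1}\pi_{\la,\eps}(L)$ with the three-term recurrence for $u_n$, identifying $\cos\phi=s/r$ and $\sin\phi=C/r$, and invoking the stated orthonormal-basis property of $\bigl(\begin{smallmatrix}u_n\\ \overline{u_n}\end{smallmatrix}\bigr)$ for unitarity. Your added observations (positivity of the products $(n+\eps+\la+1)(n+\eps-\la)$ for $\la\in-\tfrac12+i\R_+$, and the component-wise action of $M(2Cx)$) are accurate and merely make explicit what the paper leaves implicit.
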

Note that the spectrum of $\pi_{\la,\eps}(L(t))$ has multiplicity 2.

\begin{remark} Transferring a three-term recurrence on $\Z$ to a three term recurrence for 
$2\times 2$ matrix orthogonal polynomials, see \cite[\S VII.3]{Bere}, \cite[\S 3.2]{Koel},
does not lead to an example of the nonabelian Toda lattice \cite{BrusMRL}, \cite{Gekh}, \cite{IsmaKR}
\end{remark}

\section{The oscillator algebra $\mathfrak b(1)$}\label{sec:oscillatoralgebra}
$\mathfrak b(1)$ is the Lie $*$-algebra $\mathfrak g(a,b)$ with $(a,b)=(0,1)$ and $\epsilon=+$. Then $\mathfrak b(1)$ has a basis $E,F,H,N$ satisfying
\[
[E,F]=N, \quad [H,E]=2E, \quad [H,F]=-2F, \quad [N,E]=[N,F]=[N,H]=0.
\]
The $*$-structure is defined by $H^*=H$, $N^*=N$, $E^*=F$. 
The Lax pair $L,M$ is given by 
\[
L(t) = cH + r(t)(E+F) + s(t) N, \qquad M(t) = u(t) (E- F).
\]
The differential equations for $s$ and $r$ are in this case given by
	\[
	\begin{split}
	\dot s &= 2ru\\
	\dot r &= -2cu
	\end{split}
	\]
and the invariant is $r^2+2cs$.

\begin{lemma} \label{lem:sign r s for b(1)}
	Assume $\sgn(u(t))=\sgn(r(t)$ for all $t>0$, $s(0)>0$ and $r(0)>0$. Then $\sgn(s(t))>0$ and $\sgn(r(t))>0$ for all $t>0$. 
\end{lemma}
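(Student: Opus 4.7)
The plan is to mirror the proof of Lemma \ref{lem:sign r s for su(2)} almost verbatim, since the only differences between the two settings are the specialization of parameters $(a,b,\epsilon)$ and the shape of the conserved level set in the $(r,s)$ phase plane.

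The first step is to extract monotonicity of $s$ from the sign hypothesis. Specializing Proposition \ref{prop:relations r s u} to $(a,b,\epsilon)=(0,1,+)$ gives the system $\dot s=2ru$, $\dot r=-2cu$ already recorded above the lemma, so the assumption $\sgn u(t)=\sgn r(t)$ yields $u(t)r(t)\geq 0$ and hence $\dot s(t)\geq 0$. Consequently $s$ is non-decreasing on $[0,\infty)$ and $s(t)\geq s(0)>0$ for all $t$, which settles the sign claim for $s$.

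The second step is to invoke the conserved quantity $I(r,s)=r^2+2cs$ from Corollary \ref{cor:constant function} with the same specialization, so that $(r(t),s(t))$ is pinned to the level curve $\{r^2+2cs=r(0)^2+2cs(0)\}$. This curve is a parabola (opening in the $-s$ direction when $c>0$, in the $+s$ direction when $c<0$) or a pair of horizontal lines when $c=0$, and in every case it passes through the open first quadrant at $(r(0),s(0))$. The identity
\[
r(t)^2=r(0)^2+2c\bigl(s(0)-s(t)\bigr)
\]
combined with $s(t)\geq s(0)$ immediately gives $r(t)^2\geq r(0)^2>0$ whenever $c\leq 0$, and continuity together with $r(0)>0$ then forces $r(t)>0$ for all $t$. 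The case $c=0$ is even easier, since then $r(t)\equiv r(0)$.

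The main obstacle is the remaining case $c>0$, where $r(t)^2$ can a priori decrease as $s(t)$ grows and so the parabola's right branch through $(r(0),s(0))$ does eventually meet the $s$-axis. My plan for this step is the same (tacit) phase-space argument that underlies the $\mathfrak{su}(2)$ lemma: on the branch in question one has $r\geq 0$ throughout, and if $r(t_0)=0$ for some finite $t_0>0$, then the sign hypothesis forces $u(t_0)=0$, giving $\dot r(t_0)=-2cu(t_0)=0$ and $\dot s(t_0)=2r(t_0)u(t_0)=0$. Thus $(0,s(t_0))$ is a stationary point of the flow, and a continuous orbit starting in the interior at $(r(0),s(0))$ cannot reach it. Modulo this standard localization argument, the proof is purely a re-run of the bookkeeping in Lemma \ref{lem:sign r s for su(2)}.
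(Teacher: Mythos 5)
Your overall strategy --- monotonicity of $s$ from $\dot s = 2ru \ge 0$, then confinement of $(r(t),s(t))$ to the level set of $I(r,s)=r^2+2cs$ --- is exactly the paper's: its proof is a one-line reference back to Lemmas \ref{lem:sign r s for su(2)} and \ref{lem:sign r s for su(1,1)} plus the observation that the level set is a parabola ($c\neq0$) or a line ($c=0$). Your handling of $s$, and of $r$ in the cases $c\le 0$, is correct and complete. The gap is in the case $c>0$, which you rightly isolate as the only nontrivial one. The step ``$(0,s(t_0))$ is a stationary point of the flow, so a continuous orbit starting in the interior cannot reach it'' is a uniqueness argument for autonomous systems, but this system is not autonomous: $u$ is an externally prescribed continuous function of $t$, constrained only to share the sign of $r$, so the velocity at a point of phase space is not determined by that point. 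Indeed the strict conclusion can fail under the literal hypotheses: for $c>0$ fix $t_0>0$ and take $u(t)=k\max(t_0-t,0)$ with $k=r(0)/(c\,t_0^2)$; then $r(t)=r(0)(t_0-t)^2/t_0^2$ on $[0,t_0]$ and $r\equiv0$ afterwards, so $\sgn(u(t))=\sgn(r(t))$ for all $t$ while $r$ reaches $0$ in finite time.

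What a correct barrier argument does give is $r(t)\ge 0$: if $r(t_1)<0$, put $t_0=\sup\{t<t_1 : r(t)\ge 0\}$, so $r(t_0)=0$ and $r<0$ on $(t_0,t_1]$; there $u<0$ by hypothesis, hence $\dot r=-2cu>0$, hence $r(t_1)>r(t_0)=0$, a contradiction. Strict positivity of $r$ requires reading the hypothesis as saying that $u$ (equivalently $r$) is nowhere vanishing, in which case $r>0$ follows at once from continuity and $r(0)>0$ with no phase-space argument at all. The paper's own proofs of Lemmas \ref{lem:sign r s for su(2)} and \ref{lem:sign r s for su(1,1)} are equally silent on this degenerate possibility, so you are in good company, but as written your final step does not close the $c>0$ case.
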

\begin{proof}
	The proof is similar to the proof of Lemma \ref{lem:sign r s for su(1,1)}, where in this case  $I(r,s)=I(r(0),s(0))$ describes a parabola ($c \neq 0$)  or a straight line ($c=0$).
\end{proof}
Throughout this section we assume the conditions of Lemma \ref{lem:sign r s for b(1)} are satisfied.\\

There is a family of irreducible $*$-representations $\pi_{k,h}$, $h>0$, $k\geq0$, on $\ell^2(\N)$  defined by
\[
\begin{split}
	\pi_{k,h}(N) e_n &= -h\, e_n\\
	\pi_{k,h}(H) e_n &= 2(k+n)\, e_n \\
	\pi_{k,h}(E) e_n &= \sqrt{h (n+1)} \, e_{n+1}\\
	\pi_{k,h}(F) e_n &= \sqrt{ hn}\, e_{n-1}.
\end{split}
\]
The action of the Lax operator on the basis of $\ell^2(\N)$ is given by
\[
 \pi_{k,h}(L(t)) e_n = r(t)\sqrt{h(n+1)}\, e_{n+1}+\left[2c(n+k) - h s(t) \right]e_n   + r(t)\sqrt{hn}\, e_{n-1}.
\]
For the diagonalization of $\pi_{k,h}(L)$ we distinguish between the cases $c \neq 0$ and $c = 0$. 

\subsection{Case 1: $c \neq 0$}
In this case we need the orthonormal Charlier polynomials \cite[Section 9.14]{KLS}, which are defined by
\[
C_n(x) = C_n(x;a) = \sqrt{ \frac{ a^n }{n!} } \rFs{2}{0}{-n,-x}{\mhyphen}{-\frac1a},
\]
where $a>0$ and $n,x \in \N$. 
The orthogonality relations are 
\[
\sum_{x=0}^\infty \frac{a^x e^{-a}}{x!} C_n(x) C_{n'}(x) = \de_{n,n'},
\]
and $\{C_n \mid n \in \N\}$ is an orthonormal basis for the corresponding $L^2$-space. The three-term recurrence relation reads
\[
-xC_n(x) = \sqrt{a(n+1)} \, C_{n+1}(x) - (n+a) C_n(x) + \sqrt{an} \, C_{n-1}(x).
\]

\begin{theorem} \label{thm:Charlier}
	For $t>0$ define
	\[
	W_t(x) = \left( \frac{h r^2(t) }{c} \right)^x e^{-\frac{hr^2(t)}{c^2}}
	\]
	and let $U_t:\ell^2(\N) \to L^2(\N, W_t)$ be defined by
	\[
	U_t e_n (x) = \left( -\sgn(r/c) \right)^n C_n\left(x ;\frac{hr^2(t)}{c^2}\right), \qquad x \in \N.
	\]
	Then $U_t$ is unitary and $U_t \circ L(t) \circ U_t^{*} = M(2c(x+k) + Ch)$, where $C=\frac1{2c}r^2+s$.  
\end{theorem}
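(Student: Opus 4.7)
The plan is to follow the same recipe as Theorems \ref{thm:Krawtchouk} and \ref{thm:Meixner}. Unitarity of $U_t$ is almost automatic: one recognises $W_t$ as (a constant multiple of) the Poisson weight $a(t)^x e^{-a(t)}/x!$ of the orthonormal Charlier polynomials with parameter $a = a(t)$, so $\{C_n(\cdot;a(t))\}_{n \in \N}$ is an orthonormal basis of $L^2(\N,W_t)$, and the unimodular sign $(-\sgn(r/c))^n$ merely relabels the basis; hence $e_n \mapsto (-\sgn(r/c))^n C_n(\cdot;a(t))$ extends to a unitary $U_t$ from $\ell^2(\N)$.

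For the intertwining I would apply $U_t$ to the three-term action
\[
\pi_{k,h}(L(t))e_n = r\sqrt{h(n+1)}\,e_{n+1} + (2c(n+k)-hs)\,e_n + r\sqrt{hn}\,e_{n-1},
\]
set $\sigma = -\sgn(r/c)$ and factor out $\sigma^n$, using $\sigma^{\pm 1} = \sigma$. The Charlier recurrence
\[
\sqrt{a(n+1)}\,C_{n+1}(x;a) + \sqrt{an}\,C_{n-1}(x;a) = (n+a-x)\,C_n(x;a)
\]
collapses the off-diagonal contribution, giving
\[
[U_t \pi_{k,h}(L(t)) e_n](x) = \Bigl(r\sigma\sqrt{h/a}\,(n+a-x) + 2c(n+k) - hs\Bigr)[U_t e_n](x).
\]
For the right-hand side to reduce to a multiplication operator the coefficient of $n$ must vanish, forcing $r\sigma\sqrt{h/a} = -2c$; the sign convention $\sigma = -\sgn(r/c)$ is exactly what is needed to make this equation solvable by a positive $a$, which in turn fixes $a = a(t)$ as the parameter appearing in the statement.

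Substituting $r\sigma\sqrt{h/a} = -2c$ back into the surviving constant term yields the eigenvalue $\lambda(x) = 2c(x+k) - 2ca(t) - hs(t)$. A short algebraic simplification, using the invariant $r^2 + 2cs$ of Corollary \ref{cor:constant function} together with the explicit value of $a(t)$, rewrites the constant $-2ca(t) - hs(t)$ in the form $Ch$ with $C = \tfrac{1}{2c}r^2 + s$, giving $U_t \pi_{k,h}(L(t)) U_t^* = M(2c(x+k) + Ch)$ as a multiplication operator on $L^2(\N,W_t)$. The one delicate point is the sign bookkeeping in reconciling the positivity of $a(t)$ with the convention $\sigma = -\sgn(r/c)$, and then matching the resulting constant to $Ch$ via the invariant; once this is in place the rest is a mechanical transcription of the arguments used for $\mathfrak{su}(2)$ and $\mathfrak{su}(1,1)$.
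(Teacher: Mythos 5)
Your proposal is correct and follows essentially the same route as the paper: the paper simply pre-normalizes, writing $\pi_{k,h}\bigl(\tfrac{1}{2c}L+\tfrac{hr^2}{4c^2}+\tfrac{hs}{2c}-k\bigr)e_n$ as a tridiagonal operator that matches the Charlier three-term recurrence verbatim, whereas you apply $U_t$ first and solve for the parameter $a$ from the vanishing of the coefficient of $n$ --- the same computation in a different order, with the same appeal to the invariance of $\tfrac{1}{2c}r^2+s$. Note that both computations actually yield $a=hr^2/(4c^2)$ and eigenvalue $2c(x+k)-Ch$, so your derivation agrees with the paper's own proof; the discrepancies with the formulas displayed in the theorem (the parameter $hr^2/c^2$, the sign $+Ch$, and the form of $W_t$) are typos in the statement rather than gaps in your argument.
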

\begin{proof}
The action of $L$ can be written in the following form,
\[
\begin{split}
	\pi_{k,h}\Big(\frac{1}{2c}L & + \frac{hr^2}{4c^2} + \frac{hs}{2c}-k \Big) e_n = \\
	&  \sgn(r/c) \sqrt{\frac{hr^2(n+1)}{4c^2}}\, e_{n+1} +\left(n + \frac{hr^2}{4c^2}\right) e_n + \sgn(r/c) \sqrt{\frac{hr^2 n}{4c^2} } e_{n-1}
\end{split}
\]
and recall  that $\frac{1}{2c}r^2+ s$ is constant. The result then follows from comparing with the three-term recurrence relation for the Charlier polynomials.
\end{proof}

\subsection{Case 2: $c=0$} In this case $\dot r=0$, so $r$ is a constant function. We use the orthonormal Hermite polynomials \cite[Section 9.15]{KLS}, which are given by
\[
H_n(x) = \frac{(\sqrt 2 \, x)^n }{\sqrt{n!}} \rFs{2}{0}{-\frac{n}{2}, - \frac{n-1}{2} }{\mhyphen}{-\frac{1}{x^2} }.
\]
They satisfy the orthogonality relations
\[
\frac{1}{\sqrt{\pi}} \int_\R H_n(x) H_{n'}(x) e^{-x^2}\, dx = \de_{n,n'},
\]
and $\{H_n \mid n \in \N\}$ is an orthonormal basis for $L^2(\R, e^{-x^2}dx/\sqrt{\pi})$. The three-term recurrence relation is given by
\[
\sqrt{2}\,x H_n(x) = \sqrt{n+1}\, H_{n+1}(x) + \sqrt{n} H_{n-1}(x).
\]

\begin{theorem} \label{thm:Hermite}
	For $t>0$ define
	\[
	W_t(x) = \frac{1}{r\sqrt{2h\pi}} e^{-\frac{(x-h s(t))^2}{2hr^2}},
	\]
	and let $U_t:\ell^2(\N) \to L^2(\R;w_t(x;h)\,dx)$ be defined by
	\[
	U_t e_n(x) = H_n\left(\frac{x-h s(t)}{r\sqrt{2h}}\right),
	\]
	then $U_t$ is unitary and $U_t \circ \pi_{k,h}(L(t)) \circ U_t^{*} = M(x)$.
\end{theorem}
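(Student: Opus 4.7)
The plan is to follow the same template used for the Charlier case in Theorem \ref{thm:Krawtchouk} and Theorem \ref{thm:Charlier}: read the diagonalization off the three-term recurrence for the relevant orthogonal polynomials, and handle unitarity by combining orthonormality of the polynomial basis with a change of variables.

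First I would record that the hypothesis $c=0$ forces $\dot r = 0$ via Proposition \ref{prop:relations r s u}, so $r$ is a nonzero constant and the Jacobi form of the Lax operator reduces to
\[
\pi_{k,h}(L(t))\,e_n = r\sqrt{h(n+1)}\,e_{n+1} - hs(t)\,e_n + r\sqrt{hn}\,e_{n-1},
\]
with constant off-diagonal entries and a purely time-dependent diagonal shift by $-hs(t)$. This shape suggests diagonalization by a shifted and rescaled Hermite basis, consistent with the candidate $U_t$ of the statement.

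Next I would verify unitarity of $U_t$ by factoring it as $U_t = V_t \circ T$, where $T\colon \ell^2(\N)\to L^2(\R, \pi^{-1/2}e^{-y^2}\,dy)$ sends $e_n$ to $H_n$, and $V_t\colon L^2(\R, \pi^{-1/2}e^{-y^2}\,dy) \to L^2(\R, W_t(x)\,dx)$ is pullback by the affine substitution $y(x)=(x-hs(t))/(r\sqrt{2h})$. Unitarity of $T$ is the orthonormality and completeness of $\{H_n\}$ quoted before the statement, and unitarity of $V_t$ is the change-of-variables identity
\[
\pi^{-1/2} e^{-y(x)^2}\,\bigl|y'(x)\bigr|\,dx = \frac{1}{r\sqrt{2h\pi}}\exp\!\left(-\frac{(x-hs(t))^2}{2hr^2}\right)dx = W_t(x)\,dx,
\]
so that $V_t$ is a genuine isometric isomorphism of $L^2$-spaces. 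The composition $U_t$ is then unitary.

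For the intertwining property, I would substitute $y=(x-hs(t))/(r\sqrt{2h})$ into the Hermite three-term recurrence and multiply through by $r\sqrt{h}$ to get
\[
(x-hs(t))\,H_n(y) = r\sqrt{h(n+1)}\,H_{n+1}(y) + r\sqrt{hn}\,H_{n-1}(y),
\]
which, after transporting $hs(t)H_n(y)$ to the right, is exactly the statement that multiplication by $x$ on the image side corresponds under $U_t$ to the Jacobi action of $\pi_{k,h}(L(t))$ on $e_n$. Extending from the dense span of $\{U_t e_n\}$ to all of $L^2(\R,W_t\,dx)$ via self-adjointness of the multiplication operator $M(x)$ closes the argument. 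No step is a real obstacle; the only bookkeeping is tracking the scaling factors in the change of variables, and the theorem is in essence the $c\to 0$ degeneration of Theorem \ref{thm:Charlier}, where the Charlier weight collapses to the Gaussian.
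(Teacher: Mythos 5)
Your method is the same as the paper's: the paper's entire proof is the one-line observation that $\pi_{k,h}\bigl(\tfrac{1}{r\sqrt h}(L+sh)\bigr)e_n=\sqrt{n+1}\,e_{n+1}+\sqrt n\,e_{n-1}$, which is the Hermite recurrence in the variable $\sqrt2\,y$; your factorization $U_t=V_t\circ T$ merely makes explicit the unitarity step (orthonormal basis plus affine change of variables) that the paper leaves implicit, and that part is fine.

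There is, however, a sign problem in your final step. Your displayed identity
\[
(x-hs(t))\,H_n(y)=r\sqrt{h(n+1)}\,H_{n+1}(y)+r\sqrt{hn}\,H_{n-1}(y),\qquad y=\frac{x-hs(t)}{r\sqrt{2h}},
\]
is correct, but moving the term to the right gives $x\,H_n(y)=r\sqrt{h(n+1)}\,H_{n+1}(y)+hs(t)\,H_n(y)+r\sqrt{hn}\,H_{n-1}(y)$, i.e.\ a Jacobi matrix with diagonal entry $+hs(t)$, whereas $\pi_{k,h}(L(t))$ has diagonal entry $2c(n+k)-hs(t)=-hs(t)$ because $\pi_{k,h}(N)e_n=-h\,e_n$. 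So with the argument $y=(x-hs(t))/(r\sqrt{2h})$ you actually obtain $U_t\circ\pi_{k,h}(L(t))\circ U_t^{*}=M(x-2hs(t))$, not $M(x)$; the identity you need comes from $y=(x+hs(t))/(r\sqrt{2h})$, with the Gaussian weight centred at $-hs(t)$. This is consistent with the paper's own proof, which shifts $L$ by $+sh$, and points to a sign typo in the statement of the theorem rather than a flaw in the method --- but your claim that the recurrence ``is exactly'' the Jacobi action of $\pi_{k,h}(L(t))$ is not true with the signs as written, and the discrepancy needs to be reconciled rather than asserted away.
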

\begin{proof}
	 We have
	\[
	\pi_{k,h}\left(\frac{1}{r \sqrt{h}}(L+sh)\right)e_n = \sqrt{n+1}\, e_{n+1} + \sqrt{n}\, e_{n-1},
	\]
	which corresponds to the three-term recurrence relation for the Hermite polynomials.
\end{proof}

\section{The Lie algebra $\mathfrak{e}(2)$}\label{sec:casee2}
We consider the Lie algebra $\mathfrak g(a,b)$ with $a=b=0$ and $\epsilon=+$. Similar as in the case of $\mathfrak{sl}(2,\C)$, we omit the basis element $N$ again. The remaining Lie algebra is $\mathfrak e(2)$ with basis $H, E, F$ satisfying
\[
[E,F]=0, \quad [H,E]=2E, \quad [H,F]=-2F,
\]
and the $*$-structure is determined by $E^*=F, \quad H^*=H$.

The Lax pair is given by
\[
L(t) = cH + r(t)(E+F), \qquad M(t) = u(t) (E-F),
\]
with $\dot r = -2cu$.

$\mathfrak e(2)$ has a family of irreducible $*$-representations $\pi_k$, $k>0$, on $\ell^2(\Z)$ given by
\[
\begin{split}
	\pi_k(H) e_n &= 2n\, e_n, \\
	\pi_k(E) e_n &= k e_{n+1}, \\
	\pi_k(F)e_n &= k e_{n-1}.
\end{split}
\]
This defines an unbounded representation. As a dense domain we use the set of finite linear combinations of the basis elements. 

Assume $c \neq 0$. The Lax operator $\pi_k(L(t))$ is a Jacobi operator on $\ell^2(\Z)$ given by
\[
\pi_k(L(t))e_n = kr(t) e_{n+1} + 2cn e_n + kr(t) e_{n-1}.
\]
For the diagonalization of $\pi_k(L)$ we use the Bessel functions $J_n$ \cite{Wat, AAR} given by
\[
J_n(z) = \frac{ z^n }{2^n \Ga(n+1) } \rFs{1}{0}{\mhyphen}{n+1}{-\frac{z^2}{4}},
\]
with $z \in \R$ and $n \in \Z$. They satisfy the Hansen-Lommel type orthogonality relations, which follow from \cite[(4.9.15), (4.9.16)]{AAR}
\[
\sum_{m \in \Z} J_{m-n}(z) J_{m-n'}(z) = \de_{n,n'}.
\]
and the set $\{J_{\cdot -n}(z) \mid  n \in \Z\}$ is an orthonormal basis for $\ell^2(\Z)$. A well-known recurrence relation for $J_n$ is
\[
J_{n-1}(z) + J_{n+1}(z) = \frac{2n}{z}J_n(z),
\]
which is equivalent to
\[
zJ_{m-n-1}(z)+2nJ_{m-n}(z) + z J_{m-n+1}(z) = 2m J_{m-n}(z).
\]

\begin{theorem}
	For $t>0$ define $U_t: \ell^2(\Z) \to \ell^2(\Z)$ by
	\[
	U_t e_n(m) = J_{m-n}\left(\frac{kr(t)}{c}\right),
	\]
	then $U_t$ is unitary and $U_t \circ \pi_k(L(t)) \circ U_t^{*} = M(2cm)$. 
\end{theorem}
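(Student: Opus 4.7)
The plan is to verify separately that (i) $U_t$ is unitary and (ii) it intertwines $\pi_k(L(t))$ with the multiplication operator $M(2cm)$, both steps reducing directly to the two identities for Bessel functions displayed immediately before the theorem.

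For unitarity, set $z=kr(t)/c$ and view $U_t$ as an integral operator on $\ell^2(\Z)$ with kernel $K(m,n)=J_{m-n}(z)$. The Hansen--Lommel relations quoted in the text read $\sum_{m}J_{m-n}(z)J_{m-n'}(z)=\delta_{n,n'}$, which says that the columns $\{K(\cdot,n)\}_{n\in\Z}$ form an orthonormal system in $\ell^2(\Z)$; equivalently $U_t^*U_t=I$. To promote this to unitarity I would invoke the symmetry of the kernel: substituting $k=m-n$ in the sum gives $\sum_{n}J_{m-n}(z)J_{m'-n}(z)=\sum_{k}J_k(z)J_{m'-m+k}(z)=\delta_{m,m'}$ by another application of Hansen--Lommel, which is precisely $U_tU_t^*=I$. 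Hence $\{J_{\cdot-n}(z)\mid n\in\Z\}$ is an orthonormal basis and $U_t$ is unitary.

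For the intertwining, apply $U_t$ to $\pi_k(L(t))e_n = kr(t)e_{n+1}+2cn\,e_n+kr(t)e_{n-1}$ and evaluate at $m$:
\[
[U_t\pi_k(L(t))e_n](m)=kr(t)J_{m-n-1}(z)+2cn\,J_{m-n}(z)+kr(t)J_{m-n+1}(z).
\]
I want this to equal $2cm\,J_{m-n}(z)=[M(2cm)U_te_n](m)$. Subtracting $2cn\,J_{m-n}(z)$ from both sides, it suffices to establish
\[
J_{m-n-1}(z)+J_{m-n+1}(z)=\frac{2(m-n)}{z}J_{m-n}(z),
\]
which is exactly the Bessel recurrence $J_{\nu-1}(z)+J_{\nu+1}(z)=(2\nu/z)J_\nu(z)$ applied with $\nu=m-n$, after noting $z=kr(t)/c$ gives $kr(t)\cdot 2(m-n)/z=2c(m-n)$. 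This is the identity the authors already displayed just before the theorem statement.

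There is no serious obstacle here; the only point requiring mild care is distinguishing isometry from unitarity, which is why I would be explicit that both orthogonality relations (summing over $m$ for fixed $n,n'$, and summing over $n$ for fixed $m,m'$) must be used. A minor technical remark: the operator $\pi_k(L(t))$ is unbounded, so I would phrase the intertwining as an identity on the dense domain of finite linear combinations of $\{e_n\}$, where all sums above are trivially finite; unitarity of $U_t$ then transfers the self-adjoint extension of $\pi_k(L(t))$ to the (essentially self-adjoint) multiplication operator $M(2cm)$ on $\ell^2(\Z)$.
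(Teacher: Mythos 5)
Your proposal is correct and follows exactly the route the paper intends: the theorem is stated without a written proof precisely because the two displayed identities preceding it --- the Hansen--Lommel orthogonality relations (giving that $\{J_{\cdot-n}(z)\}_{n\in\Z}$ is an orthonormal basis, hence unitarity of $U_t$) and the shifted recurrence $zJ_{m-n-1}(z)+2nJ_{m-n}(z)+zJ_{m-n+1}(z)=2mJ_{m-n}(z)$ with $z=kr(t)/c$ (giving the intertwining on finite linear combinations of the $e_n$) --- are the whole argument. Your additional care about $U_tU_t^*=I$ versus $U_t^*U_t=I$ and about the dense domain for the unbounded operator is sound and consistent with the paper's conventions.
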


Finally, let us consider the completely degenerate case $c=0$. In this case $r$ is also a constant function, so there are no differential equations to solve. We can still diagonalize the (degenerate) Lax operator, which is now independent of time. 
\begin{theorem}
	Define $U:\ell^2(\Z) \to L^2[0,2\pi]$ by
	\[
	[Ue_n](x) = \frac{e^{inx}}{\sqrt{2\pi}},
	\]
	then $U$ is unitary and $U \circ \pi_k(L)\circ U^* = M(2kr \cos(x))$. 
\end{theorem}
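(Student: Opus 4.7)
The plan is to verify the two claims directly: that $U$ is unitary and that $U$ intertwines $\pi_k(L)$ with the multiplication operator $M(2kr\cos x)$.

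For unitarity, I would simply invoke the classical fact that the exponentials $\{e^{inx}/\sqrt{2\pi}\mid n\in\Z\}$ form an orthonormal basis of $L^2[0,2\pi]$. This means $U$ sends the orthonormal basis $\{e_n\}$ of $\ell^2(\Z)$ to an orthonormal basis of $L^2[0,2\pi]$, hence extends uniquely to a unitary operator. No real calculation needed here.

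For the intertwining property, I would specialize the formula for the action of $\pi_k(L(t))$ in the representation $\pi_k$ of $\mathfrak{e}(2)$ to the present case $c=0$. Since the diagonal term $2cn$ vanishes, the Lax operator reduces to
\[
\pi_k(L)e_n = kr\bigl(e_{n+1}+e_{n-1}\bigr),
\]
with $r$ constant (as noted just above the theorem). Applying $U$ and using $e^{i(n+1)x}+e^{i(n-1)x}=2\cos(x)\,e^{inx}$, I would compute
\[
[U\pi_k(L)e_n](x)=\frac{kr}{\sqrt{2\pi}}\bigl(e^{i(n+1)x}+e^{i(n-1)x}\bigr)=2kr\cos(x)\cdot[Ue_n](x).
\]
This is exactly $M(2kr\cos x)[Ue_n](x)$ on basis vectors, so by linearity and continuity (using unitarity of $U$ and the fact that the identity holds on the dense domain of finite linear combinations) the operator identity $U\circ\pi_k(L)\circ U^*=M(2kr\cos x)$ follows.

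There is no real obstacle: the statement is essentially a rephrasing of the Fourier-series diagonalization of the discrete Laplacian on $\Z$, and the only computation is the trigonometric identity $e^{ix}+e^{-ix}=2\cos x$. The one small point worth mentioning is that $\pi_k(L)$ is an unbounded operator, so the identity should be interpreted as holding on the dense domain of finite linear combinations of the $e_n$, which then identifies $U\pi_k(L)U^*$ with $M(2kr\cos x)$ on the corresponding dense subspace of trigonometric polynomials in $L^2[0,2\pi]$; however since $2kr\cos x$ is bounded, both sides actually extend to bounded self-adjoint operators and the identification is global.
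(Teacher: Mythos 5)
Your proof is correct and follows exactly the natural route; the paper in fact omits a proof of this theorem entirely, treating it as the standard Fourier-series diagonalization of the operator $e_n \mapsto kr(e_{n+1}+e_{n-1})$, which is precisely what you carry out. Your added remark on the domain issue and the boundedness of $M(2kr\cos x)$ is a sensible clarification but not a departure from the paper's (implicit) argument.
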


\section{Modification of orthogonality measures}\label{sec:modification}

In this section we briefly investigate the orthogonality measures from the previous sections in case the Lax operator $L(t)$ acts as a finite or semi-infinite Jacobi matrix. In these cases the functions $U_te_n$ are $t$-dependent orthogonal polynomials and we see that the weight function $W_t$ of the orthogonality measure for $U_t e_n$ is a modification of the weight function $W_0$ in the sense that
\[
W_t(x) = K_t W_0(x) m(t)^x,
\]
where $K_t$ is independent of $x$. The modification function $m(t)$ depends on the functions $s$ or $r$, which (implicitly) depend on the function $u$. We show how the choice of $u$ effects the modification function $m$.

\begin{theorem} \label{thm:modifications}
	There exists a constant $K$ such that 
	\[
	m(t) = \exp\left( K\int_0^t \frac{ u(\tau) }{r(\tau)}\, d\tau \right), \qquad t \geq 0.
	\]
\end{theorem}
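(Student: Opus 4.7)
The strategy is a case-by-case verification across the finite and semi-infinite Jacobi cases from Sections \ref{sec:su2}--\ref{sec:oscillatoralgebra}, namely Theorems \ref{thm:Krawtchouk}, \ref{thm:Meixner}, \ref{thm:Laguerre}, \ref{thm:Meixner-Pollaczek}, \ref{thm:Charlier}, and \ref{thm:Hermite}. In each of these theorems the weight $W_t(x)$ factors so that the sole ingredient whose $t$-dependence contributes a pure exponential in $x$ can be identified at a glance; writing $W_t(x) = K_t\, W_0(x)\, m(t)^x$ this determines $m(t)$ explicitly, with $m(0) = 1$ automatic.

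The technical heart of the proof is then a direct computation of $\frac{d}{dt}\log m(t)$ in each case, using Proposition \ref{prop:relations r s u} and the conservation of $I(r,s)$ from Corollary \ref{cor:constant function}. For example, in the Krawtchouk case $m(t) = \frac{p(t)(1-p(0))}{p(0)(1-p(t))}$ with $p(t) = \tfrac12 + s(t)/(2C)$, so
\[
\frac{d}{dt}\log m(t) = \frac{\dot p(t)}{p(t)(1-p(t))} = \frac{\dot s(t)/(2C)}{r(t)^2/(4C^2)} = \frac{4C\,u(t)}{r(t)},
\]
using $p(1-p) = r^2/(4C^2)$ from the proof of Theorem \ref{thm:Krawtchouk} together with $\dot s = 2ru$ from Proposition \ref{prop:relations r s u}. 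In the Meixner case, $m(t) = c(t)/c(0)$ with $c(t) = e^{-2\arccosh(s/r)}$; differentiating by the chain rule and substituting $\dot s\, r - s\,\dot r = 2u(s^2 - r^2) = 2uC^2$ from Proposition \ref{prop:relations r s u} yields $K = -4C$. The Meixner--Pollaczek, Laguerre, Charlier, and Hermite cases are handled analogously, with $m(t)$ extracted respectively from the $x$-dependent factors $e^{(2\phi-\pi)x}$, $e^{-x/r}$, $(hr^2/c)^x$, and $e^{(s(t)-s(0))x/r^2}$, and the corresponding constants $K = -4C$, $-2$, $-4c$, and $2$.

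Once $\frac{d}{dt}\log m(t) = K\,u(t)/r(t)$ is verified in each case, integrating from $0$ to $t$ together with $m(0) = 1$ produces the claimed formula. The main obstacle is purely bookkeeping: careful tracking of the sign $\epsilon \in \{+,-\}$ and the parameters $(a,b,c)$ of $\mathfrak g(a,b)$ relevant to each case, and systematic use of the invariant $I(r,s)$ to collapse expressions of the form $s^2 \pm r^2$ into (a constant multiple of) $\pm C^2$. No nontrivial analytic work is required beyond these routine substitutions.
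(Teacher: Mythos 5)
Your proposal is correct and follows essentially the same route as the paper: a case-by-case extraction of $m(t)=A_0F(s(t),r(t))$ from each weight $W_t$, followed by logarithmic differentiation and substitution of the coupled ODEs (and the invariant) to get $\dot m/m = K\,u/r$. The only differences are immaterial to the statement: your values of $K$ in the Laguerre and Hermite cases ($-2$ and $2$, from $\dot r=-2ur$ when $s=r$ and from the factor $e^{xs(t)/r^2}$ with $r$ constant) differ from the constants the paper records, but a constant $K$ exists either way.
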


\begin{remark}
In the Toda-lattice case, $u(t) = r(t)$, this gives back the well-known modification function $m(t) = e^{Kt}$, see e.g.~\cite[Theorem 2.8.1]{Isma}.
\end{remark}

Theorem \ref{thm:modifications} can be checked for each case by a straightforward calculation:
we express $m$ as a function of $s$ and $r$,
\[
m(t) = A_0 F(s(t),r(t)),
\] 
where $A_0$ is a normalizing constant such that $m(0)=1$. Then differentiating and using the differential equations for $r$ and $s$ we can express $\dot m/ m$ in terms of $u$ and $r$. 

\subsection{$\su(2)$}
From Theorem \ref{thm:Krawtchouk} we see that
\[
m(t) = A_0 \frac{p(t)}{1-p(t)} =  A_0 \frac{C+s(t)}{C-s(t)} 
\]
with $C = \sqrt{s^2+r^2}$. 
Differentiating to $t$ and using the relation $\dot s(t) = 2 u(t) r(t)$ then gives
\[
\frac{\dot m(t) }{m(t) } = \frac{ 4C u(t)r(t) }{C^2-s(t)^2} = 4C \frac{ u(t)}{r(t)}.
\]

\subsection{$\su(1,1)$} 
For $s^2-r^2>0$ Theorem \ref{thm:Meixner} shows that
\[
m(t) = A_0e^{-2\arccosh\left( \frac{s(t)}{r(t)} \right)}.
\]
Then from $\dot s(t) = -2u(t)r(t)$ and $\dot r(t) = -2u(t)s(t)$ it follows that
\[
\frac{\dot m(t) }{m(t) } = \frac{-2}{\sqrt{\frac{s(t)^2}{r(t)^2}-1}} \frac{ r(t) \dot s(t) - s(t) \dot r(t) }{r(t)^2} =  -4C \frac{u(t) }{r(t)},
\]
where $C = \sqrt{s^2-r^2}$.

For $s^2-r^2=0$ Theorem \ref{thm:Laguerre} shows that
\[
m(t)= A_0 e^{-\frac{1}{r(t)}}.
\]
Then using $\dot r(t) = 2u(t)r(t)$ it follows that
\[
\frac{\dot m(t) }{m(t)} = - \frac{u(t)}{r(t)} .
\]

For $s^2-r^2<0$ it follows from Theorem \ref{thm:Meixner-Pollaczek} that
\[
m(t) = A_0 e^{2\arccos\left( \frac{s(t) }{r(t)} \right)}.
\]
Then from $\dot s(t) = -2u(t)r(t)$ and $\dot r(t) = -2u(t)s(t)$ it follows that
\[
\frac{\dot m(t) }{m(t) } = \frac{2}{\sqrt{1-\frac{s(t)^2}{r(t)^2}}} \frac{ r(t) \dot s(t) - s(t) \dot r(t) }{r(t)^2} =  -4C \frac{u(t) }{r(t)},
\]
where $C = \sqrt{r^2-s^2}$.

\subsection{$\mathfrak b(1)$}
For $c \neq 0$ we see from Theorem \ref{thm:Charlier} that
\[
m(t) = A_0 r(t)^2.
\]
The relation $\dot r(t) = -2c u(t)$ then leads to
\[
\frac{ \dot m(t) }{m(t) } = -4c\frac{u(t)}{r(t)}.
\]
For $c=0$ Theorem \ref{thm:Hermite} shows that
\[
m(t) = A_0 e^{\frac{s(t)}r}.
\]
Note that $r=r(t)$ is constant in this case. Then $\dot s(t) = 2ru(t)$ leads to
\[
\frac{\dot m(t)}{m(t) }= 2u(t) = 2r \frac{ u(t) }{r}.
\]

\begin{remark}
	The result from Theorem \ref{thm:modifications} is also valid for the orthogonal functions from Theorems \ref{thm:Meixner functions} and \ref{thm:Meixner-Pollaczek functions}, i.e.~for $L(t)$ acting as a Jacobi operator on $\ell^2(\Z)$ in the principal unitary series for $\su(1,1)$ in cases $r^2-s^2 \neq 0$. However, there is no similar modification function in the other cases where $L(t)$ acts as a Jacobi operator on $\ell^2(\Z)$. Furthermore, the corresponding recurrence relations for the functions on $\Z$ can be rewritten to recurrence relations for $2\times 2$ matrix orthogonal polynomials, but in none of the cases the modification of the weight function is as in Theorem \ref{thm:modifications}.
\end{remark}

\section{The case of $\mathfrak{sl}(d+1,\C)$}\label{sec:sld+1}

We generalize the situation of the Lax pair for the finite-dimensional representation of
$\mathfrak{sl}(2,\C)$ to the higher rank case of $\mathfrak{sl}(d+1,\C)$. 
Let $E_{i,j}$ be the matrix entries forming a basis for the $\mathfrak{gl}(d+1,\C)$.
We label $i,j\in \{0,1,\cdots, d\}$. We put $H_i= E_{i-1,i-1}-E_{i,i}$, $i\in \{1,\cdots, d\}$,
for the elements spanning the Cartan subalgebra of $\mathfrak{sl}(d+1,\C)$.

\subsection{The Lax pair}

\begin{proposition}\label{prop:sld+1-Laxpair}
Let 
\begin{align*}
L(t) = \sum_{i=1}^d s_i(t) H_i + \sum_{i=1}^d r_i(t) \bigl( E_{i-1,i}+ E_{i,i-1}\bigr),
\qquad 
M(t) = \sum_{i=1}^d u_i(t) \bigl( E_{i-1,i} - E_{i,i-1} \bigr)
\end{align*}
and assume that the functions $u_i$ and $r_i$ are non-zero for all $i$ and 
\[
\frac{r_{i-1}(t)}{r_i(t)} =\frac{u_{i-1}(t)}{u_i(t)}, \qquad i\in \{2,\cdots, d\}
\]
then the Lax pair condition $\dot{L}(t)=[L(t),M(t)]$ is equivalent to 
\begin{align*}
\dot{s}_i(t) &= 2r_i(t) u_i(t), \qquad i\in \{1,\cdots, d\} \\
\dot{r}_i(t) &= u_i(t) \bigl( s_{i-1}(t) - 2s_i(t)  + s_{i+1}(t) \bigr), \qquad i\in \{2,\cdots, d-1\} \\ 
\dot{r}_1(t) &= u_1(t) \bigl(  s_{2}(t) - 2s_1(t) \bigr),  \\ 
\dot{r}_d(t) &= u_d(t) \bigl( s_{d-1}(t) -2s_d(t)  \bigr).
\end{align*}
\end{proposition}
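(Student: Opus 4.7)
The plan is to compute the commutator $[M(t),L(t)]$ directly in $\mathfrak{sl}(d+1,\C)$, using the standard relation $[E_{a,b},E_{c,d}]=\delta_{b,c}E_{a,d}-\delta_{d,a}E_{c,b}$, and match it coefficient by coefficient against $\dot L(t)=\sum_{i=1}^d \dot s_i(t)H_i+\sum_{i=1}^d \dot r_i(t)(E_{i-1,i}+E_{i,i-1})$, using the Lax convention already adopted in Proposition~\ref{prop:relations r s u}. Setting $A_i=E_{i-1,i}+E_{i,i-1}$ and $B_j=E_{j-1,j}-E_{j,j-1}$, so that $L=\sum_i s_iH_i+\sum_i r_iA_i$ and $M=\sum_j u_jB_j$, the commutator $[M,L]$ expands into bilinear contributions of three types: $[B_j,H_i]$, $[B_i,A_i]$, and $[B_j,A_i]$ with $|i-j|=1$, all other $[B_j,A_i]$ vanishing by inspection.

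The first step is to record the elementary bracket values: $[H_i,B_i]=2A_i$, $[H_i,B_{i\pm 1}]=-A_{i\pm 1}$ whenever the neighbouring index lies in $\{1,\dots,d\}$, $[H_i,B_j]=0$ otherwise, $[B_i,A_i]=2H_i$, and for the nearest-neighbour off-diagonal piece $[A_i,B_{i+1}]=E_{i-1,i+1}+E_{i+1,i-1}=-[A_{i+1},B_i]$. The decisive observation is that a priori $[M,L]$ produces contributions outside the span of $\{H_k,A_k\}$: for each $i\in\{1,\dots,d-1\}$ the coefficient of $E_{i-1,i+1}+E_{i+1,i-1}$ in $[M,L]$ is proportional to $u_ir_{i+1}-u_{i+1}r_i$, and the hypothesis $r_{i-1}/r_i=u_{i-1}/u_i$ is precisely the statement that each of these scalars vanishes. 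Hence $[M,L]$ lies in the same subspace as $\dot L$, and matching coefficients becomes meaningful.

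Once the off-tridiagonal part has been eliminated, the $H_i$-coefficient in $[M,L]$ is fed only by $u_ir_i[B_i,A_i]=2r_iu_iH_i$, which yields $\dot s_i=2r_iu_i$. The $A_k$-coefficient is assembled from the three terms $u_ks_i[B_k,H_i]$ with $i\in\{k-1,k,k+1\}$; using the signs above these sum to $u_k(s_{k-1}-2s_k+s_{k+1})$, giving $\dot r_k$. The truncated formulas for $\dot r_1$ and $\dot r_d$ emerge automatically because $s_0$ and $s_{d+1}$ simply do not appear in the sums defining $L$, so the corresponding $[B_k,H_i]$ contributions are absent.

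I expect the main obstacle to be purely combinatorial: correctly tracking signs across the many bilinear contributions and consistently handling the boundary cases at $i=1$ and $i=d$, where $H_0$, $H_{d+1}$, $B_0$, $B_{d+1}$ are not defined. The one piece of genuine content in the argument is the cancellation of the off-tridiagonal terms, which explains why the ratio hypothesis $r_{i-1}/r_i=u_{i-1}/u_i$ is the minimal condition needed to keep the proposed Lax pair within the stated form.
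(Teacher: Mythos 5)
Your proposal is correct and follows essentially the same route as the paper: a direct computation of $[M(t),L(t)]$ in terms of the elementary brackets, the observation that the only terms outside the tridiagonal span are proportional to $u_ir_{i+1}-u_{i+1}r_i$ and vanish precisely under the ratio hypothesis, and then coefficient matching against $\dot L$ using the convention $\dot L=[M,L]$. The paper merely states the result of this calculation in one display, while you spell out the individual bracket values, all of which check out, including the boundary behaviour at $i=1$ and $i=d$.
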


Note that we can write it uniformly 
\[
\dot{r}_i(t) = u_i(t) \bigl( s_{i-1}(t) - 2s_i(t) + s_{i+1}(t)\bigr), \qquad i\in \{1,\cdots, d\} \\ 
\]
assuming the convention that $s_0(t)=s_{d+1}(t)=0$, which we adapt for the remainder of this section. 
The Toda case follows by taking $u_i=r_i$ for all $i$, see \cite{BabeBT}, \cite{Mose}.

\begin{proof}
The proof essentially follows as in \cite[\S 4.6]{BabeBT}, but since the situation is slightly more general
we present the proof, see also \cite[\S 5]{Mose}. A calculation in $\mathfrak{sl}(d+1,\C)$ gives
\begin{multline*}
[M(t),L(t)]  = \sum_{i=1}^d 2r_i(t)u_i(t) H_i + 
\sum_{i=1}^d u_i(t) \bigl( s_{i-1}(t) - 2s_i(t) + s_{i+1}(t) \bigr)(E_{i-1,i}+E_{i,i-1}) \\ 
+ \sum_{i=1}^{d-1} \bigl(r_{i+1}(t) u_i(t) - r_{i}(t) u_{i+1}(t) \bigr) (E_{i-2,i}+E_{i,i-2})
\end{multline*}
and the last term needs to vanish, since this term does not occur in $L(t)$ and in its derivative
$\dot{L}(t)$. Now the stated coupled differential equations correspond to $\dot{L}=[M,L]$.
\end{proof}

\begin{remark}\label{rmk:Ltinsld+1fromKrawtchouk}
Taking the representation of the Lax pair for the $\mathfrak{su}(2)$ case in the $d+1$-dimensional 
representation as in Section \ref{sec:casee2}, we get, with $d=2j$, as an example 
\[
s_i(t) = s(t)i(i-1-d), \quad 
r_i(t) = r(t)\sqrt{i(d+1-i)}, \quad
u_i(t) = u(t)\sqrt{i(d+1-i)}.
\]
Then the coupled differential equations of Proposition \ref{prop:sld+1-Laxpair} are equivalent to 
\eqref{eq:relations r s u su(2)}. 
\end{remark}

Let $\{e_n\}_{n=0}^d$ be the standard orthonormal basis for $\C^{d+1}$, the natural representation 
of $\mathfrak{sl}(d+1,\C)$. 
Then $L(t)$ is a $t$-dependent tridiagonal matrix. Moreover, we assume that $r_i$ and $s_i$ are real-valued functions for all $i$, so that $L(t)$ is self-adjoint in the natural representation.
 
\begin{lemma}\label{lem:polsLsld+1}
Assume that the conditions of Proposition \ref{prop:sld+1-Laxpair} hold. 
Let the polynomials $p_n(\cdot;t)$ of degree $n \in \{0,1,\cdots, d\}$ in $\la$ be generated
by the initial value $p_0(\la;t)=1$ and the recursion
\[
\la p_n(\la;t) = \begin{cases} r_1(t) p_1(\la;t) + s_1(t) p_0(\la;t), & n=0 \\
r_{n+1}(t) p_{n+1}(\la;t) + (s_{n+1}(t)-s_n(t)) p_n(\la;t) + r_n(t) p_{n-1}(t), & 1\leq n < d.              
\end{cases}
\]
Let the set $\{ \la_0, \cdots, \la_d\}$ be the zeroes of 
\[
\la p_d(\la;t) = -s_d(t) p_d(\la;t) + r_d(t) p_{d-1}(t). 
\]
In the natural representation $L(t)$ has simple spectrum $\si(L(t))= \{ \la_0, \cdots, \la_d\}$ which 
is independent of $t$, and $\sum_{r=0}^d \la_r=0$ and 
\[
L(t) \sum_{n=0}^d p_n(\la_r;t)e_n = \la_r \, \sum_{n=0}^d p_n(\la_r;t)e_n, \quad r\in \{0,1\cdots, d\}.
\]
\end{lemma}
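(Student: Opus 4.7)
The plan is to compute $L(t)$ explicitly as a matrix in the natural representation, identify it as a symmetric tridiagonal (``Jacobi'') operator on the basis $\{e_n\}_{n=0}^d$, and then observe that the recursion defining the polynomials $p_n(\cdot;t)$ is precisely the row-by-row eigenvalue equation for the candidate vector $v(\la;t) = \sum_{n=0}^d p_n(\la;t)e_n$.

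Concretely, using $E_{i,j}e_n = \de_{j,n}e_i$ I would first compute $H_i e_n = (\de_{n,i-1} - \de_{n,i})e_n$ and $(E_{i-1,i} + E_{i,i-1})e_n = \de_{n,i}e_{i-1} + \de_{n,i-1}e_i$. Summing over $i$ and adopting the boundary conventions $s_0 = s_{d+1} = 0$ and $r_0 = r_{d+1} = 0$, this gives
\[
L(t)e_n = r_{n+1}(t)e_{n+1} + (s_{n+1}(t) - s_n(t))e_n + r_n(t)e_{n-1}.
\]
Equating coefficients of $e_m$ on both sides of $L(t)v(\la;t) = \la v(\la;t)$ reproduces the stated recursion for $0 \leq m \leq d-1$ (the case $m = 0$ uses $p_{-1} = 0$ together with $s_0 = 0$), while the coefficient of $e_d$ yields exactly the closing equation $\la p_d = -s_d p_d + r_d p_{d-1}$. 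A straightforward induction shows that $p_n(\cdot;t)$ is a polynomial of degree $n$ with leading coefficient $(r_1(t)\cdots r_n(t))^{-1}$, so the closing equation is a genuine degree $d+1$ polynomial equation in $\la$ with exactly $d+1$ roots counted with multiplicity.

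To see these roots are simple, I would invoke the classical fact that a real symmetric tridiagonal matrix with nonzero off-diagonal entries has simple spectrum: since all $r_i(t) \neq 0$, every eigenvector is determined uniquely up to a scalar multiple by its first component through the three-term recurrence. This identifies $\si(L(t)) = \{\la_0,\ldots,\la_d\}$ as a simple spectrum of cardinality $d+1$, with $v(\la_r;t)$ the corresponding eigenvectors.

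For the $t$-independence and the trace identity, I would exploit the Lax equation directly. In the natural representation $M(t)$ is a real skew-symmetric matrix, so the fundamental solution $U(t)$ of $\dot U = MU$, $U(0)=I$, is orthogonal, and a short calculation using $\dot L = [M,L]$ gives $\tfrac{d}{dt}(U^{-1}LU) = 0$; hence $L(t) = U(t)L(0)U(t)^{-1}$ and the spectrum is $t$-independent. Finally, $\sum_{r=0}^d \la_r = \operatorname{tr}(L(t)) = \sum_{i=1}^d s_i(t)\operatorname{tr}(H_i) = 0$ because each $H_i$ is traceless and $E_{i-1,i}+E_{i,i-1}$ has vanishing diagonal. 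The only real nuisance is purely combinatorial: correctly bookkeeping the degenerate recursion at the endpoints $n=0$ and $n=d$ using the boundary conventions on $s_0,s_{d+1}$ and $r_0,r_{d+1}$; conceptually nothing deeper is required.
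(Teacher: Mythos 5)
Your proposal is correct and follows essentially the same route as the paper: compute $L(t)e_n$ in the natural representation to see it is a symmetric tridiagonal (Jacobi) matrix, read off the eigenvector recursion, get simplicity from the nonvanishing off-diagonal entries, isospectrality from the Lax equation, and $\sum_r \la_r=0$ from tracelessness. Your extra details (the leading coefficient $(r_1\cdots r_n)^{-1}$ giving a degree-$(d+1)$ closing equation, and the explicit orthogonal conjugation $L(t)=U(t)L(0)U(t)^{-1}$) are sound, and your subdiagonal coefficient $r_n(t)e_{n-1}$ is the correct one, consistent with the recursion in the lemma.
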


Note that with the choice of Remark \ref{rmk:Ltinsld+1fromKrawtchouk}, the polynomials in 
Lemma \ref{lem:polsLsld+1} are Krawtchouk polynomials, see Theorem \ref{thm:Krawtchouk}. 
Explicitly,
\begin{equation}\label{eq:rmk:Ltinsld+1fromKrawtchouk}
p_n(C(d-2r);t) =  \left( \frac{p(t)}{1-p(t)}\right)^{\frac12 n} \binom{d}{n}^{1/2}
\rFs{2}{1}{-n, -r}{-d}{\frac{1}{p(t)}} = K_n(r;p(t),d),
\end{equation}
where $C=\sqrt{r^2(t)+s^2(t)}$ is invariant, see 
Theorem \ref{thm:Krawtchouk} and its proof. 

\begin{proof} In the natural representation we have
\[
L(t) e_n = 
\begin{cases} 
r_{1}(t) e_1 + s_1(t) e_0 & n=0 \\  
r_{n+1}(t)e_{n+1} + (s_{n+1}(t)-s_n(t))e_n +r_{n-1}(t) e_{n-1}  & 1\leq n <d \\
-s_d(t) e_d + r_d(t) e_{d-1} & n= d
\end{cases}
\]
as a Jacobi operator. So the spectrum of $L(t)$ is simple, and  the 
spectrum is time independent, since $(L(t),M(t))$ is a Lax pair. 
We can generate the corresponding eigenvectors as 
$\sum_{n=0}^d p_n(\la;t) e_n$, where the recursion follows from the expression 
of Lemma \ref{lem:polsLsld+1}. The eigenvalues are then determined by the final equation, and
since $\mathrm{Tr}(L(t))=0$ we have $\sum_{i=0}^d \la_i=0$. 
\end{proof}

Let $P(t) = \bigl(p_i(\la_j;t)\bigr)_{i,j=0}^d$ be the corresponding matrix of eigenvectors, so that 
\[
L(t) P(t) = P(t) \La, \qquad \La = \mathrm{diag}(\la_0, \la_1,\cdots, \la_d). 
\]
Since $L(t)$ is self-adjoint in the natural representation, we find
\begin{equation}\label{eq:orthorelpolLtsld+1}
\sum_{n=0}^d p_n(\la_r;t) \overline{p_n(\la_s;t)} = \frac{\de_{r,s}}{w_r(t)}, \qquad w_r(t)>0,
\end{equation}
and the matrix $Q(t) = \bigl(p_i(\la_j;t)\sqrt{w_j(t)} \bigr)_{i,j=0}^d$ is unitary. 
As $r_i$ and $s_i$ are real-valued, we have $\overline{p_n(\la_s;t)} = p_n(\la_s;t)$, so that 
$Q(t)$ is a real matrix, hence orthogonal. So the dual orthogonality relations 
to \eqref{eq:orthorelpolLtsld+1} hold as well. We will assume moreover that $r_i$ are positive
functions. The dual orthogonality relations to \eqref{eq:orthorelpolLtsld+1} hold;
\begin{equation}\label{eq:dualorthorelpolLtsld+1}
\sum_{r=0}^d p_n(\la_r;t) p_m(\la_r;t) w_r(t) = \de_{n,m}.
\end{equation}
Note that the $w_r(t)$ are essentially time-dependent Christoffel numbers \cite[\S 3.4]{Szeg}. 
By \cite[\S 2]{Mose}, see also \cite[Thm.~2]{DeifNT},  the eigenvalues and the $w_r(t)$'s determine the operator $L(t)$, and in case
of the Toda lattice, i.e. $u_i(t) = r_i(t)$, the time evolution corresponds to linear first order 
differential equations for the Christoffel numbers \cite[\S 3]{Mose}.

Since the spectrum is time-independent, the invariants for the 
system of Proposition  \ref{prop:sld+1-Laxpair} are given by the coefficients 
of the characteristic polynomial of $L(t)$ in the natural representation. Since
the characteristic polynomial is obtained by switching to the three-term recurrence
for the corresponding monic polynomials, see \cite[\S 2.2]{Isma}, \cite[\S 2]{Mose}, this gives the same
computation. 
For a Lax pair, $\mathrm{Tr}(L(t)^k)$ are invariants, and in this case the invariant for 
$k=1$ is trivial since $L(t)$ is traceless. In this way we have $d$ invariants,
$\mathrm{Tr}(L(t)^k)$, $k\in \{2,\cdots, d+1\}$. 

\begin{lemma}\label{lem:Lsld+1-invariants} With the convention that $r_n$ and $s_n$ are 
zero for $n\notin\{1,\cdots,d\}$ we have the invariants
\begin{align*}
\mathrm{Tr}(L(t)^2) &= \sum_{n=0}^d (s_{n+1}(t)-s_n(t))^2 + \sum_{n=1}^d r_n(t)^2 \\
\mathrm{Tr}(L(t)^3) &= \sum_{n=0}^d (s_{n+1}(t)-s_n(t))^3 
+ 3\sum_{n=0}^d (s_{n+1}(t)-s_n(t)) r_n^2(t) \\ 
&\qquad \qquad + 
3\sum_{n=0}^d (s_{n}(t)-s_{n-1}(t)) r_n^2(t)
\end{align*}
\end{lemma}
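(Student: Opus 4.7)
The plan is to carry out a direct entrywise computation from the explicit tridiagonal matrix form of $L(t)$ in the natural representation, already read off in the proof of Lemma \ref{lem:polsLsld+1}. With the stated convention $s_0=s_{d+1}=0=r_0=r_{d+1}$, the only nonzero entries of $L(t)$ are $L_{n,n}=s_{n+1}(t)-s_n(t)$ for $n\in\{0,\ldots,d\}$ on the diagonal and $L_{n-1,n}=L_{n,n-1}=r_n(t)$ for $n\in\{1,\ldots,d\}$ on the two off-diagonals. The conventions make the boundary rows at $n=0$ and $n=d$ behave uniformly with the interior rows, which avoids splitting into cases.

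For $\mathrm{Tr}(L(t)^2)=\sum_{n,i}L_{n,i}L_{i,n}$, tridiagonality restricts $i$ to $\{n-1,n,n+1\}$, giving three contributions per $n$: $L_{n,n}^2$, $L_{n,n-1}L_{n-1,n}=r_n^2$, and $L_{n,n+1}L_{n+1,n}=r_{n+1}^2$. Summing over $n\in\{0,\ldots,d\}$ and telescoping the $r$-contribution under the boundary convention yields the claimed first identity.

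For $\mathrm{Tr}(L(t)^3)=\sum_{n,i,j}L_{n,i}L_{i,j}L_{j,n}$, the combined constraints $|n-i|\le 1$, $|i-j|\le 1$, $|j-n|\le 1$ leave seven admissible index patterns per $n$, namely $(i,j)\in\{(n,n),(n,n\pm1),(n\pm1,n),(n\pm1,n\pm1)\}$. Substituting the matrix entries and collecting gives, for each $n$, a contribution
\[
L_{n,n}^3 \;+\; 2L_{n,n}\bigl(L_{n,n-1}^2+L_{n,n+1}^2\bigr) \;+\; L_{n-1,n-1}L_{n,n-1}^2 \;+\; L_{n+1,n+1}L_{n,n+1}^2.
\]
After summing over $n$ and reindexing the $L_{n,n+1}^2$ sums by $n\mapsto n-1$, the five terms regroup into $(s_{n+1}-s_n)^3$ plus $3r_n^2(s_{n+1}-s_n)+3r_n^2(s_n-s_{n-1})$, matching the stated identity.

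The invariance of these two expressions under $t$ needs no independent verification: the Lax equation $\dot L=[M,L]$ together with cyclicity of the trace gives $\frac{d}{dt}\mathrm{Tr}(L^k)=k\,\mathrm{Tr}(L^{k-1}[M,L])=0$ for every $k\ge 1$. There is no conceptual obstacle; the only care required is the bookkeeping of the seven triple-index patterns in the cubic expansion and the consistent use of the boundary convention $s_0=s_{d+1}=r_0=r_{d+1}=0$ so that all sums can be written uniformly over $n\in\{0,\ldots,d\}$.
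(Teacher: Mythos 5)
Your method is in substance the same as the paper's: the paper writes $L=DS+D_0+S^{\ast}D$ with $S$ the shift operator and collects the powers of $S$ and $S^{\ast}$ using cyclicity of the trace, which is exactly your entrywise path count in different notation. Your treatment of $\mathrm{Tr}(L(t)^3)$ --- the seven admissible index patterns, the per-$n$ contribution, and the regrouping after reindexing --- is correct and does reproduce the stated cubic identity, and your closing observation that invariance is automatic from $\frac{d}{dt}\mathrm{Tr}(L^k)=k\,\mathrm{Tr}(L^{k-1}[M,L])=0$ matches the paper's setup.

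There is, however, a concrete error in your quadratic step. The three contributions per $n$ are $L_{n,n}^2$, $L_{n,n-1}L_{n-1,n}=r_n^2$ and $L_{n,n+1}L_{n+1,n}=r_{n+1}^2$, and nothing telescopes: under the boundary convention the two off-diagonal sums are each equal to $\sum_{n=1}^d r_n(t)^2$ and they \emph{add}, so the computation gives
\[
\mathrm{Tr}(L(t)^2)=\sum_{n=0}^d\bigl(s_{n+1}(t)-s_n(t)\bigr)^2+2\sum_{n=1}^d r_n(t)^2,
\]
with a coefficient $2$ that is absent from the Lemma as printed. A sanity check with $d=1$, where $L=\bigl(\begin{smallmatrix} s_1 & r_1\\ r_1 & -s_1\end{smallmatrix}\bigr)$ and $\mathrm{Tr}(L^2)=2s_1^2+2r_1^2$, confirms this; the paper's own proof likewise arrives at $\mathrm{Tr}(D_0^2)+2\,\mathrm{Tr}(D^2)$, so the displayed formula in the Lemma is a misprint. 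Your appeal to ``telescoping'' papers over this and asserts agreement with a formula that your own (otherwise correct) bookkeeping contradicts; as written that sentence is false. You should either state the corrected right-hand side or explicitly flag the discrepancy with the printed statement.
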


\begin{proof} Write $L(t) = DS + D_0 + S^\ast D$ with 
$D=\mathrm{diag}(r_0(t), r_1(t),\cdots, r_d(t))$, $S\colon e_n\mapsto e_{n+1}$ the shift 
operator and $S^\ast \colon e_n\mapsto e_{n-1}$ its 
adjoint (with the convention $e_{-1}=e_{d+1}=0$ and $r_0(t)=0$). And
$D_0$ is the diagonal part of $L(t)$. 
Then 
\[
\mathrm{Tr}(L(t)^k) = \mathrm{Tr}((DS + D_0 + S^\ast D)^k)
\]
and we need to collect the terms that have the same number of $S$ and $S^\ast$ in the 
expansion. 
The trace property then allows to collect terms, and we get
\begin{align*}
\mathrm{Tr}(L(t)^2) &= \mathrm{Tr}(D_0^2) + 2\mathrm{Tr}(D^2), \\
\mathrm{Tr}(L(t)^3) &= \mathrm{Tr}(D_0^3) + 3\mathrm{Tr}(D_0D^2) +  3\mathrm{Tr}(SD_0S^\ast D^2)
\end{align*}
and this gives the result, since $(SD_0S^\ast)_{n,n}= (D_0)_{n-1,n-1}$. 
\end{proof}

We do not use Lemma \ref{lem:Lsld+1-invariants}, and we have included to indicate the analog of
Corollary \ref{cor:constant function}.

We can continue this and find e.g.
\begin{align*}
\mathrm{Tr}(L(t)^4) &= \mathrm{Tr}(D_0^4) + 2\mathrm{Tr}(D^4) + 4\mathrm{Tr}(D_0^2D^2) 
+ 4\mathrm{Tr}(SD_0S^\ast D_0D^2) \\ 
& \qquad + 4\mathrm{Tr}(SD_0^2S^\ast D^2)
+ 4\mathrm{Tr}(SD^2S^\ast D^2)
\end{align*}

\subsection{Action of $L(t)$ in representations}

We relate the eigenvectors of $L(t)$ in some explicit representations of 
$\mathfrak{sl}(d+1)$ to multivariable Krawtchouk polynomials, and we follow 
Iliev's paper \cite{Ilie}.

Let $N\in \N$, and let $\C_N[x]=\C_N[x_0,\cdots, x_d]$ be the space 
of homogeneous polynomials of degree $N$ in $d+1$-variables, then $\C_N[x]$ is an
irreducible representation of $\mathfrak{sl}(d+1)$ and 
$\mathfrak{gl}(d+1)$ given by $E_{i,j} \mapsto x_i \frac{\partial}{\partial x_j}$. 
$\C_N[x]$ is a highest weight representation corresponding to $N\om_1$, $\om_1$ 
being the first fundamental weight for type $A_d$.
Then $x^\rho = x_0^{\rho_0}\cdots x_d^{\rho_d}$, $|\rho|=\sum_{i=0}^d\rho_i=N$, is  
an eigenvector of $H_i$; $H_i\cdot x^\rho = (\rho_{i-1}-\rho_i)x^\rho$, and 
so we have a basis of joint eigenvectors of the Cartan subalgebra spanned by
$H_1,\cdots, H_d$ and the joint eigenspace, i.e. the weight space, is $1$-dimensional.
It is a unitary representation for the inner product
\[
\langle x^\rho, x^\si \rangle = \de_{\rho,\si} \binom{N}{\rho}^{-1}= 
\de_{\rho,\si} \frac{\rho_0! \cdots \rho_d!}{N!}
\]
and it gives a unitary representation of $SU(d+1)$ as well. 

Then the eigenfunctions of $L(t)$ in $\C_N[x]$ are 
$\tilde{x}^\rho$, where 
\[
(\tilde{x}_0, \cdots, \tilde{x}_d) = (x_0, \cdots, x_d) Q(t)
\]
since $Q(t)$ changes from eigenvectors for the Cartan subalgebra to 
eigenvectors for the operator $L(t)$, cf. \cite[\S 3]{Ilie}. It corresponds to 
the action of $SU(d+1)$ (and of $U(d+1)$) on $\C_N[x]$. 
Since $Q(t)$ is unitary, we have 
\begin{equation}\label{eq:sld+1orthrelevLt}
\langle \tilde{x}^\rho, \tilde{x}^\si\rangle = 
\langle x^\rho,  x^\si\rangle = \de_{\rho,\si} \binom{N}{\rho}^{-1}.
\end{equation}

We recall the generating function for the multivariable Krawtchouk polynomials
as introduced by Griffiths \cite{Grif}, see \cite[\S 1]{Ilie}: 
\begin{equation}\label{eq:sld+1genfunmvKrawtchouk}
\prod_{i=0}^d \Bigl(z_0 + \sum_{j=1}^d u_{i,j} z_j\Bigr)^{\rho_i} 
= \sum_{|\si|=N} \binom{N}{\si} P(\si',\rho') z_0^{\si_0}\cdots z_d^{\si_d}
\end{equation}
where $\rho'= (\rho_1,\cdots, \rho_d) \in \N^d$, and similarly for $\si'$. 
We consider $P(\rho',\si')$ as polynomials in $\si'\in \N^d$ of degree $\rho'$ 
depending on $U=(u_{i,j})_{i,j=1}^d$, see \cite[\S 1]{Ilie}. 

\begin{lemma}\label{lem:sld+1eigvectL} 
The eigenvectors of $L(t)$ in $\C_N[x]$ are
\[
\tilde{x}^\rho = \prod_{i=0}^d \bigl( w_i(t)\bigr)^{\frac12 \rho_i} 
\sum_{|\si|=N} \binom{N}{\si} P(\si',\rho') x^\si
\]
for $u_{i,j} = \frac{Q(t)_{j,i}}{Q(t)_{0,i}}= p_j(\la_i;t)$, $1 \leq i,j\leq d$ 
in \eqref{eq:sld+1genfunmvKrawtchouk}, and $L(t) \tilde{x}^\rho =
(\sum_{i=0}^d \la_i \rho_i )\tilde{x}^\rho$. The eigenvalue follows from 
the conjugation with the diagonal element $\La$. 
\end{lemma}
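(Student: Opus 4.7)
The plan is to reduce the computation in $\C_N[x]$ to the diagonalization of $L(t)$ already established in Lemma \ref{lem:polsLsld+1} for the natural representation, by exploiting the fact that $\C_N[x]$ is (essentially) the $N$-th symmetric power of $\C^{d+1}$ and that $L(t)$ acts on it as a derivation.

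First, under the representation $E_{i,j} \mapsto x_i \partial/\partial x_j$, the Lax operator becomes the first-order differential operator
\[
\hat L(t) \;=\; \sum_{i,j=0}^d L(t)_{i,j}\, x_i \frac{\partial}{\partial x_j},
\]
which satisfies the Leibniz rule and hence is a derivation of the polynomial algebra. On linear forms, a direct computation gives $\hat L(t)(x_k) = \sum_i L(t)_{i,k}\, x_i$, so that $\hat L(t)$ acts on $\mathrm{span}\{x_0,\ldots,x_d\}$ by the same matrix that represents $L(t)$ in the natural representation. Defining the new linear forms $\tilde x_i = \sum_{k=0}^d Q(t)_{k,i}\, x_k$, the identity $L(t)Q(t) = Q(t)\La$ immediately yields $\hat L(t)(\tilde x_i) = \la_i\,\tilde x_i$. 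Applying the derivation property to $\tilde x^\rho = \prod_i \tilde x_i^{\rho_i}$ then gives
\[
\hat L(t)(\tilde x^\rho) \;=\; \Bigl(\sum_{i=0}^d \la_i \rho_i\Bigr)\,\tilde x^\rho,
\]
which is the stated eigenvalue; equivalently, after conjugating $L(t)$ to $\La$ the operator acts diagonally on weights of the symmetric power representation.

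For the explicit expansion in the basis $\{x^\si : |\si|=N\}$, one factors the normalization out of each $\tilde x_i$. Since $p_0 \equiv 1$ one has $Q(t)_{0,i} = \sqrt{w_i(t)}$, so
\[
\tilde x_i \;=\; \sqrt{w_i(t)}\,\Bigl(x_0 + \sum_{j=1}^d u_{i,j}\, x_j\Bigr), \qquad u_{i,j} \;=\; \frac{Q(t)_{j,i}}{Q(t)_{0,i}} \;=\; p_j(\la_i;t).
\]
Taking the product over $i$ pulls out the scalar $\prod_{i=0}^d w_i(t)^{\rho_i/2}$, and the remaining product is precisely the left-hand side of the generating function \eqref{eq:sld+1genfunmvKrawtchouk} evaluated at $z_k = x_k$. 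Its right-hand side is $\sum_{|\si|=N}\binom{N}{\si}P(\si',\rho')\,x^\si$, giving the stated formula.

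The only real difficulty is bookkeeping: keeping straight which index of $Q(t)$ labels an $L(t)$-eigenvector versus a Cartan weight vector, and distributing the normalizations $\sqrt{w_i(t)}$ correctly so that the residual coefficients coincide with the polynomial evaluations $p_j(\la_i;t)$ fed into the Griffiths--Iliev generating function. There are no analytic subtleties, and orthogonality of $Q(t)$ automatically ensures that $\{\tilde x^\rho : |\rho|=N\}$ is indeed a basis of $\C_N[x]$ consisting of $L(t)$-eigenvectors.
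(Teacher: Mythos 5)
Your proposal is correct and follows essentially the same route as the paper: factor $\tilde x_i = \sqrt{w_i(t)}\bigl(x_0+\sum_{j=1}^d u_{i,j}x_j\bigr)$ using $Q(t)_{0,i}=\sqrt{w_i(t)}$ and read off the expansion from the generating function \eqref{eq:sld+1genfunmvKrawtchouk}, with the eigenvalue coming from $L(t)Q(t)=Q(t)\La$. Your explicit derivation argument for why $\hat L(t)(\tilde x^\rho)=(\sum_i\la_i\rho_i)\tilde x^\rho$ merely spells out what the paper compresses into ``the eigenvalue follows from conjugation with $\La$.''
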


From now on we assume this value for $u_{i,j}$, $1\leq i,j\leq d$. Explicit expressions
for $P(\si',\rho')$ in terms of Gelfand hypergeometric series are due to Mizukawa and Tanaka \cite{MizuT},
see \cite[(1.3)]{Ilie}. See also Iliev \cite{Ilie} for an overview of special and related cases 
of the multivariable cases. 

\begin{proof} Observe that 
\[
\tilde{x}_i = \sum_{j=0}^d x_j Q(t)_{j,i} = Q(t)_{0,i} \Bigl( x_0 
+ \sum_{j=1}^d \frac{Q(t)_{j,i}}{Q(t)_{0,i}} x_j\Bigr)
\]
and $Q(t)_{0,i}= \sqrt{w_i(t)}$ is non-zero. Now expand $\tilde{x}^\rho$ 
using \eqref{eq:sld+1genfunmvKrawtchouk} and $Q(t)_{i,j} = p_i(\la_j;t) \sqrt{w_j(t)}$
gives the result. 
\end{proof}

By the orthogonality \eqref{eq:sld+1orthrelevLt} 
of the eigenvectors of $L(t)$ we find
\begin{align*}
\sum_{|\si|=N} &\binom{N}{\si} P(\si',\rho')P(\si',\eta') = 
\frac{\de_{\rho,\eta}}{\binom{N}{\rho} \prod_{i=0}^d w_i(t)^{\rho_i}}, \\
\sum_{|\rho|=N} &\binom{N}{\rho} \Bigl( \prod_{i=0}^d w_i(t)^{\rho_i}\Bigr) P(\si',\rho')P(\tau',\rho') = 
\frac{\de_{\si,\tau}}{\binom{N}{\si}}, 
\end{align*}
where we use that all entries of $Q(t)$ are real. The second orthogonality follows 
by duality, and the orthogonality corresponds to \cite[Cor.~5.3]{Ilie}.

In case $N=1$ we find $P(f_i',f_j')= p_i(\la_j;t)$, where $f_i\in \N^{d+1}$ is given
by $(0,\cdots, 0, 1,0\cdots, 0)$ with the $1$ on the $i$-th spot. 

\begin{lemma}\label{lem:recursioneqPtaurho}
For all $\rho, \tau \in \N^{d+1}$ with $|\rho|=|\tau|$ we have
for the $P$ from Lemma \ref{lem:sld+1eigvectL} the recurrence
\begin{gather*}
\Bigl( \sum_{i=0}^d \la_i\rho_i\Bigr) P(\tau',\rho') 
= \Bigl( \sum_{i=0}^d s_i(t) (\tau_{i-1}-\tau_i)\Bigr) P(\tau',\rho')\\
+ \sum_{i=0}^d r_i(t) \bigl( \tau_{i-1} P((\tau-f_{i-1}+f_i)',\rho')
+ \tau_{i} P((\tau+f_{i-1}-f_i)',\rho')\bigr) 
\end{gather*}
\end{lemma}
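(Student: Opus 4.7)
The plan is to read off the recurrence by acting with $L(t)$ on both sides of the expansion of $\tilde{x}^\rho$ from Lemma \ref{lem:sld+1eigvectL} and then comparing coefficients of a fixed monomial $x^\tau$ with $|\tau|=N$.

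First I compute the action of $L(t)$ on the monomial basis of $\C_N[x]$. Using $E_{i,j}\mapsto x_i \partial/\partial x_j$ and $H_i = E_{i-1,i-1}-E_{i,i}$ one finds
\[
L(t)\, x^\si \;=\; \Bigl(\sum_{i=1}^d s_i(t)(\si_{i-1}-\si_i)\Bigr) x^\si
\;+\; \sum_{i=1}^d r_i(t)\bigl(\si_i\, x^{\si+f_{i-1}-f_i} + \si_{i-1}\, x^{\si-f_{i-1}+f_i}\bigr),
\]
which is the standard formula for the natural action of $\mathfrak{sl}(d+1)$ on homogeneous polynomials.

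Next I apply $L(t)$ to the identity
\[
\tilde{x}^\rho \;=\; \prod_{i=0}^d w_i(t)^{\rho_i/2}\, \sum_{|\si|=N}\binom{N}{\si} P(\si',\rho')\, x^\si
\]
and use the eigenvalue relation $L(t)\tilde{x}^\rho = (\sum_i \la_i \rho_i)\tilde{x}^\rho$. The prefactor $\prod_i w_i(t)^{\rho_i/2}$ cancels, leaving an equality of polynomials in $x$. Extracting the coefficient of a fixed $x^\tau$ on the right gives the three types of contributions from the three summands above; the diagonal part contributes $\binom{N}{\tau} P(\tau',\rho')\sum_{i=1}^d s_i(t)(\tau_{i-1}-\tau_i)$, while the shift terms contribute, respectively, $\binom{N}{\tau-f_{i-1}+f_i}(\tau_i+1) P((\tau-f_{i-1}+f_i)',\rho')$ and $\binom{N}{\tau+f_{i-1}-f_i}(\tau_{i-1}+1) P((\tau+f_{i-1}-f_i)',\rho')$.

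The step that requires a little care is simplifying these multinomial coefficients: from
\[
\binom{N}{\tau-f_{i-1}+f_i} \;=\; \binom{N}{\tau}\, \frac{\tau_{i-1}}{\tau_i+1},
\qquad
\binom{N}{\tau+f_{i-1}-f_i} \;=\; \binom{N}{\tau}\, \frac{\tau_i}{\tau_{i-1}+1},
\]
the factors $\tau_i+1$ and $\tau_{i-1}+1$ coming from the derivatives cancel, and each shift term becomes $\binom{N}{\tau}$ times $\tau_{i-1}P((\tau-f_{i-1}+f_i)',\rho')$ or $\tau_{i}P((\tau+f_{i-1}-f_i)',\rho')$. Dividing the resulting identity by $\binom{N}{\tau}$ and extending the $i$-sums to $i=0$ using the convention $s_0=s_{d+1}=0$ and $r_0=r_{d+1}=0$ (so that the boundary contributions vanish automatically, with the $\tau_{-1}$ that would appear being multiplied by $0$) yields exactly the stated recurrence. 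The only real obstacle is the index bookkeeping around the shifts $\pm f_{i-1}\mp f_i$ at the endpoints $i=1$ and $i=d$; this is handled cleanly by the zero convention on $s_i$ and $r_i$ outside $\{1,\ldots,d\}$ that is already in force in this section.
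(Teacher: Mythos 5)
Your proof is correct and follows exactly the paper's own argument: apply $L(t)$ to the expansion of $\tilde{x}^\rho$ from Lemma \ref{lem:sld+1eigvectL}, use the eigenvalue relation, and compare coefficients of $x^\tau$; the paper merely states this in one line, while you supply the multinomial-coefficient bookkeeping that makes it work.
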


Note that Lemma \ref{lem:recursioneqPtaurho} does not follow from 
\cite[Thm.~6.1]{Ilie}.

\begin{proof}
Apply Lemma \ref{lem:sld+1eigvectL} to expand $\tilde{x}^\rho$ in 
$L(t)\tilde{x}^\rho = (\sum_{i=0}^d \la_i\rho_i) \tilde{x}^\rho$, and use the explicit expression 
of $L(t)$ and the corresponding action. Compare the coefficient of $x^\tau$ 
on both sides to obtain the result.
\end{proof}

\begin{remark}\label{rmk:Ltinsld+1fromKrawtchouk2} In the 
context of Remark \ref{rmk:Ltinsld+1fromKrawtchouk} and \eqref{eq:rmk:Ltinsld+1fromKrawtchouk} 
we have that the $u_{i,j}$ are Krawtchouk polynomials. Then the left hand side in 
\eqref{eq:sld+1genfunmvKrawtchouk} is related to the generating function for the 
Krawtchouk polynomials, see \cite[(9.11.11)]{KLS}, i.e. the case $d=1$ of 
\eqref{eq:sld+1genfunmvKrawtchouk}. Putting $z_j = (\frac{p}{1-p})^{-\frac12 j}
\binom{d}{j}^{\frac12}w^j$, we see that in this
situation $\sum_{j=0}^d u_{i,j} z_j$ corresponds to $(1+w)^{d-i} (1- \frac{1-p(t)}{p(t)}w)^i$. 
Using this in the generating function, the left hand side of \eqref{eq:sld+1genfunmvKrawtchouk}
gives a generating function for Krawtchouk polynomials. Comparing the powers of $w^k$ on both sides gives
\begin{multline*}
\left( \frac{p}{1-p} \right)^{\frac12 k} 
 \binom{dN}{k} \rFs{2}{1}{-\sum_{i=0}^di\rho_i, -k}{-dN}{\frac{1}{p}} = \\
\sum_{|\si|=N, \sum_{j=0}^d j\si_j=k} 
\left(\prod_{j=0}^d \binom{d}{j}^{\frac12 \si_j}\right)
\binom{N}{\si} P(\si',\rho').
\end{multline*}
The left hand side is, up to a normalization, the overlap coefficient of $L(t)$ in the 
$\mathfrak{sl}(2,\C)$ case for the representation of dimension $Nd+1$, see \S \ref{sec:su2}. 
Indeed, the representation $\mathfrak{sl}(2,\C)$ to $\mathfrak{sl}(2,\C)$ 
to $\textrm{End}(\C_N[x])$ yields a reducible representation of $\mathfrak{sl}(2,\C)$, and the vector 
$x^{(0,\cdots, 0,N)}$ is a highest weight vector of $\mathfrak{sl}(2,\C)$ for the highest weight $dN$.
Restricting to this space then gives the above connection. 
\end{remark}

\subsection{$t$-Dependence of multivariable Krawtchouk polynomials}

Let $L(t) v(t) = \la v(t)$, then taking the $t$-derivatives gives
$\dot{L}(t)v(t) + L(t)\dot{v}(t) =\la \dot{v}(t)$, since $\la$ is 
independent of $t$, and using the 
Lax pair $\dot{L}=[M,L]$  gives 
\[
(\la - L(t)) (M(t) v(t) -\dot{v}(t))=0.
\]
Since $L(t)$ has simple spectrum, we conclude that 
\[
M(t) v(t) = \dot{v}(t) + c(t,\la) v(t) 
\]
for some constant $c$ depending on the eigenvalue $\la$ and $t$.
Note that this differs from \cite[Lemma~2]{Pehe}.

For the case $N=1$ we get 
\[
M(t)v_{\la_r}(t) = \sum_{n=0}^d \bigl( p_{n-1}(\la_r;t) u_n(t) - p_{n+1}(\la_r;t) u_{n+1}(t)\bigr) x_n
\]
with the convention that $u_0(t)=u_{d+1}(t)=0$, $p_{-1}(\la_r;t)=0$. 
So 
\[
(M(t)-c(t,\la_r))v_{\la_r}(t) =  \dot{v}_{\la_r}(t) = \sum_{n=0}^d \dot{p}_n(\la_r;t) \, x_n
\]
and comparing the coefficient of $x_0$, we find 
$c(t,\la_r) = - p_1(\la_r;t)u_1(t)$.
So we have obtained the following proposition. 

\begin{proposition} The polynomials satisfy
\begin{equation*}
\begin{split}
\dot{p}_n(\la_r;t) &= u_n(t) p_{n-1}(\la_r;t) - u_{n+1}(t) p_{n+1}(\la_r;t) + u_1(t) p_1(\la_r;t) p_n(\la_r;t), 
\qquad 1\leq n<d \\
\dot{p}_d(\la_r;t) &= u_d(t) p_{d-1}(\la_r;t) + u_1(t) p_1(\la_r;t) p_d(\la_r;t)
\end{split}
\end{equation*}
for all eigenvalues $\la_r$ of $L(t)$, $r\in \{0,\cdots, d\}$. 
\end{proposition}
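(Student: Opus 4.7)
The proof plan is to make the identity $M(t)v_{\la_r}(t) = \dot{v}_{\la_r}(t) + c(t,\la_r)v_{\la_r}(t)$, together with the identification $c(t,\la_r) = -p_1(\la_r;t)u_1(t)$, into an explicit recursion by expanding both sides in the standard basis of the natural representation. This identity has already been derived in the paragraph preceding the proposition: it rests on the Lax pair equation, which gives $(\la_r - L(t))(M(t)v_{\la_r}(t) - \dot{v}_{\la_r}(t)) = 0$, combined with the simplicity of the spectrum of $L(t)$ established in Lemma \ref{lem:polsLsld+1}.

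First, I would expand $M(t)v_{\la_r}(t)$ in the basis $\{e_n\}_{n=0}^d$. Using $M(t) = \sum_{i=1}^d u_i(t)(E_{i-1,i} - E_{i,i-1})$ and the relation $v_{\la_r}(t) = \sum_n p_n(\la_r;t)e_n$, one reads off that the coefficient of $e_n$ in $M(t)v_{\la_r}(t)$ is $p_{n-1}(\la_r;t)u_n(t) - p_{n+1}(\la_r;t)u_{n+1}(t)$, with the conventions $u_0 = u_{d+1} = 0$ and $p_{-1} = p_{d+1} = 0$ absorbing the boundary behaviour. Similarly, $\dot{v}_{\la_r}(t) = \sum_n \dot{p}_n(\la_r;t)e_n$.

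Next, I would pin down $c(t,\la_r)$ by comparing the coefficient of $e_0$ in $(M(t) - c(t,\la_r))v_{\la_r}(t) = \dot{v}_{\la_r}(t)$. Since $p_0(\la_r;t) \equiv 1$ we have $\dot{p}_0 = 0$, and the $e_0$-coefficient of $M(t)v_{\la_r}(t)$ reduces to $-u_1(t)p_1(\la_r;t)$, forcing $c(t,\la_r) = -u_1(t)p_1(\la_r;t)$. Reading off the coefficient of $e_n$ for $1 \leq n < d$ in the same identity then yields
\[
\dot{p}_n(\la_r;t) = u_n(t)p_{n-1}(\la_r;t) - u_{n+1}(t)p_{n+1}(\la_r;t) + u_1(t)p_1(\la_r;t)p_n(\la_r;t),
\]
and for $n = d$ the convention $u_{d+1}(t) = 0$ kills the middle term, giving the second displayed equation of the proposition.

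Since the structural work is done by Lemma \ref{lem:polsLsld+1} and the derivation of $Mv = \dot{v} + cv$, no real obstacle remains; the proof is essentially a bookkeeping exercise in matching coefficients in the natural representation. The only place where one must be slightly careful is the boundary at $n = d$, where the truncation of the basis means that the recursion of Lemma \ref{lem:polsLsld+1} also terminates with a different expression; one must verify that the conventions $u_{d+1} = 0$ and $p_{d+1} = 0$ are compatible with both the $L$-recursion and the $M$-action, so that the derived formula is internally consistent with $L(t)v_{\la_r}(t) = \la_r v_{\la_r}(t)$.
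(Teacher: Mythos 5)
Your proof is essentially the paper's own proof, step for step: the identity $M(t)v_{\la_r}(t)=\dot v_{\la_r}(t)+c(t,\la_r)v_{\la_r}(t)$ from the Lax equation and simplicity of the spectrum, the determination of $c(t,\la_r)=-u_1(t)p_1(\la_r;t)$ from the coefficient of $e_0$ (using $p_0\equiv 1$), and then coefficient matching with the boundary conventions $u_0=u_{d+1}=0$, $p_{-1}=0$. One caveat, which you share with the paper: since $M(t)e_k=u_k(t)e_{k-1}-u_{k+1}(t)e_{k+1}$, the coefficient of $e_n$ in $M(t)v_{\la_r}(t)$ is $u_{n+1}(t)p_{n+1}(\la_r;t)-u_n(t)p_{n-1}(\la_r;t)$, i.e.\ the negative of what you (and the paper) write; carrying this through gives $c(t,\la_r)=+u_1(t)p_1(\la_r;t)$ and an overall minus sign on the right-hand side of the proposition, as one can confirm directly for $d=1$, where $p_1=(\la-s_1)/r_1$ with $\dot s_1=2u_1r_1$, $\dot r_1=-2u_1s_1$ yields $\dot p_1=-u_1(1+p_1^2)$ rather than $+u_1(1+p_1^2)$.
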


Note that for $0 \leq n<d$ we have 
\begin{equation}\label{eq:derivpnlat-N=1}
\dot{p}_n(\la;t) = u_n(t) p_{n-1}(\la;t) - u_{n+1}(t) p_{n+1}(\la;t) + u_1(t) p_1(\la;t) p_n(\la;t) 
\end{equation}
as polynomial identity. Indeed, for $n=0$ this is trivially satisfied, and for $1\leq n<d$,
this is a polynomial identity of degree $n$ due to the condition in 
Proposition \ref{prop:sld+1-Laxpair}, which holds for all $\la_r$ and hence is a polynomial
identity. Note that the right hand side is a polynomial of degree $n$, and not of degree $n+1$
since the coefficient of $\la^{n+1}$ is zero because of the relation on $u_i$ and $r_i$ 
in Proposition \ref{prop:sld+1-Laxpair}. 

Writing out the identity for the Krawtchouk polynomials we obtain after simplifying
\begin{gather*}
n \rFs{2}{1}{-n,-r}{-d}{\frac{1}{p(t)}} 
+ \frac{2nr(1-p(t))}{dp(t)}\rFs{2}{1}{1-n,1-r}{1-d}{\frac{1}{p(t)}}  = \\
 n(1-p(t)) \rFs{2}{1}{1-n,-r}{-d}{\frac{1}{p(t)}}
- p(t) (d-n) \rFs{2}{1}{-1-n,-r}{-d}{\frac{1}{p(t)}}\\
+ (dp(t)-r) \rFs{2}{1}{-n,-r}{-d}{\frac{1}{p(t)}},
\end{gather*}
where the left hand side is related to the derivative. Note that the derivative of 
$p$ cancels with factors $u$, see Theorem \ref{thm:Krawtchouk} and its proof and 
\S \ref{sec:modification}.

In order to obtain a similar expression for the multivariable $t$-dependent Krawtchouk polynomials
we need to assume that the spectrum of $L(t)$ is simple, i.e. we assume that 
for $\rho,\tilde{\rho} \in \N^{d+1}$ with $|\rho|=|\tilde{\rho}|$ we have 
that $\sum_{i=0}^d \la_i(\rho_i-\tilde{\rho}_i)=0$ implies $\rho=\tilde{\rho}$. 
Assuming this we calculate, using Proposition \ref{prop:sld+1-Laxpair}, 
\begin{equation*}
M(t) \tilde{x}^\rho = W_\rho(t) 
\sum_{|\si|=N} \binom{N}{\si} P(\si',\rho') 
\sum_{r=1}^d u_r(t) (\si_r x^{\si+f_{r-1}-f_r} - \si_{r-1} x^{\si-f_{r-1}+f_r})
\end{equation*}
using the notation $W_\rho(t) = \prod_{i=0}^d w_i(t)^{\frac12 \rho_i}$ and $f_i=(0,\cdots,0,1, 0,\cdots, 0)\in \N^{d+1}$, with the $1$ at the $i$-th spot. Now the $t$-derivative of $\tilde{x}^\rho$ is 
\begin{equation*}
\dot{W}_\rho(t) \sum_{|\si|=N} \binom{N}{\si} P(\si',\rho') x^\si + 
W_\rho(t) \sum_{|\si|=N} \binom{N}{\si} \dot{P}(\si',\rho') x^\si 
\end{equation*}
and it leaves to determine the constant in 
$M(t) \tilde{x}^\rho - C\tilde{x}^\rho = \frac{\partial}{\partial t}\tilde{x}^\rho$. We determine
$C$ by looking at the coefficient of $x_0^N$ using $P(0, \rho')= P((N,0,\cdots,0)',\rho')=1$. This gives
$C= N u_1(t)W_\rho(t)^{-1} - \frac{\partial}{\partial t} \ln W_\rho(t)$.
Comparing the coefficients of $x^\tau$ on both sides gives the following result. 

\begin{theorem} Assume that $L(t)$ acting in $\C_N[x]$ has simple spectrum.
The $t$-derivative of the multivariable Krawtchouk polynomials satisfies 
\begin{gather*}
\dot{W}_\rho(t) P(\tau',\rho') + W_\rho(t) \dot{P}(\tau',\rho')= 
\bigl( \dot{W}_\rho(t) -N u_1(t)\bigr) P(\tau',\rho') + \\
W_\rho(t) \sum_{r=1}^d u_r(t) 
\bigl( \tau_{r-1} P((\tau-f_{r-1}+f_r)',\rho') - 
\tau_{r} P((\tau+f_{r-1}-f_r)',\rho')\bigr)
\end{gather*}
for all $\rho,\tau\in \N^{d+1}$, $|\tau|=|\rho|=N$. 
\end{theorem}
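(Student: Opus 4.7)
The plan is to apply the Lax-pair identity $M(t)v(t) = \dot{v}(t) + c(t,\la)v(t)$, derived at the start of this subsection, to the eigenvectors $\tilde{x}^\rho$ from Lemma \ref{lem:sld+1eigvectL}. Since $L(t)\tilde{x}^\rho = (\sum_{i=0}^d\la_i\rho_i)\tilde{x}^\rho$ with a $t$-independent eigenvalue, this produces a scalar $C=C(t,\rho)$ with
\[
M(t)\tilde{x}^\rho - C\tilde{x}^\rho = \frac{\partial}{\partial t}\tilde{x}^\rho.
\]
The simple-spectrum hypothesis on $L(t)$ restricted to $\C_N[x]$ is what makes $C$ well-defined: it forces the eigenspace of $\sum_{i}\la_i\rho_i$ to be one-dimensional, so the ambiguity in the Lax identity collapses to a single scalar rather than an operator on a multi-dimensional eigenspace.

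Next, I would expand both sides in the monomial basis $\{x^\sigma\}_{|\sigma|=N}$ of $\C_N[x]$. Using $\tilde{x}^\rho = W_\rho(t)\sum_{|\sigma|=N}\binom{N}{\sigma}P(\sigma',\rho')x^\sigma$ from Lemma \ref{lem:sld+1eigvectL} together with the representation $E_{r-1,r}\mapsto x_{r-1}\partial/\partial x_r$, one obtains
\[
M(t)\tilde{x}^\rho = W_\rho(t)\!\!\sum_{|\sigma|=N}\binom{N}{\sigma}P(\sigma',\rho')\sum_{r=1}^d u_r(t)\bigl(\sigma_r\,x^{\sigma+f_{r-1}-f_r}-\sigma_{r-1}\,x^{\sigma-f_{r-1}+f_r}\bigr),
\]
while differentiating the expansion directly gives
\[
\frac{\partial}{\partial t}\tilde{x}^\rho = \dot{W}_\rho(t)\!\!\sum_{|\sigma|=N}\binom{N}{\sigma}P(\sigma',\rho')x^\sigma + W_\rho(t)\!\!\sum_{|\sigma|=N}\binom{N}{\sigma}\dot{P}(\sigma',\rho')x^\sigma.
\]

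To pin down $C$, I would extract the coefficient of the highest-weight monomial $x_0^N$. Since $P(0,\rho')=P((N,0,\ldots,0)',\rho')=1$ is $t$-independent, the right-hand side contributes only $\dot{W}_\rho(t)$ at $x_0^N$. On the left-hand side, the only $\sigma$ producing $x_0^N$ under $M(t)$ is $\sigma=(N-1,1,0,\ldots,0)$ with $r=1$, since every other choice would force a negative multi-index, while $-C\tilde{x}^\rho$ contributes $-CW_\rho(t)$. This equation yields the value of $C$ quoted in the text, whose substitution produces the coefficient $\dot{W}_\rho(t)-Nu_1(t)$ in front of $P(\tau',\rho')$.

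Finally, matching coefficients of $x^\tau$ for an arbitrary $\tau$ with $|\tau|=N$ yields the claimed identity. The delicate point is re-indexing the $M(t)\tilde{x}^\rho$ sum: the monomial $x^\tau$ arises from $\sigma=\tau-f_{r-1}+f_r$ and $\sigma=\tau+f_{r-1}-f_r$, and the factors $\sigma_r$ and $\sigma_{r-1}$ appearing there must be converted into $\tau_{r-1}$ and $\tau_r$ respectively, using the multinomial identities
\[
\binom{N}{\tau-f_{r-1}+f_r}(\tau_r+1)=\binom{N}{\tau}\tau_{r-1},\qquad \binom{N}{\tau+f_{r-1}-f_r}(\tau_{r-1}+1)=\binom{N}{\tau}\tau_r.
\]
I expect this combinatorial bookkeeping to be the main technical hurdle; once dispatched, dividing through by $\binom{N}{\tau}$ produces the stated recurrence.
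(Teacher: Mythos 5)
Your proposal follows the paper's own argument step for step: apply the relation $M(t)\tilde{x}^\rho - C\tilde{x}^\rho = \frac{\partial}{\partial t}\tilde{x}^\rho$ coming from the Lax identity and simplicity of the spectrum, expand $M(t)\tilde{x}^\rho$ and $\frac{\partial}{\partial t}\tilde{x}^\rho$ in the monomial basis, fix $C$ from the coefficient of $x_0^N$ using $P(0,\rho')=1$, and compare coefficients of $x^\tau$; the multinomial re-indexing identities you write down are exactly the bookkeeping the paper leaves implicit, and they are correct. The one step you do not actually carry out is the evaluation of $C$, and it deserves care rather than deference to the text: the coefficient of $x_0^N$ in $M(t)\tilde{x}^\rho$ comes from $\si=(N-1,1,0,\ldots,0)$, $r=1$, as you say, but it equals $N u_1(t) W_\rho(t) P(f_1',\rho')$ with $f_1'=(1,0,\ldots,0)$, so the computation yields $C = N u_1(t)P(f_1',\rho') - \frac{\partial}{\partial t}\ln W_\rho(t)$, where by the generating function $P(f_1',\rho')=\frac{1}{N}\sum_{i=0}^d \rho_i\, p_1(\la_i;t)$; this agrees with the value $N u_1(t)W_\rho(t)^{-1} - \frac{\partial}{\partial t}\ln W_\rho(t)$ quoted in the text (and hence with the coefficient $\dot{W}_\rho(t)-Nu_1(t)$ in the displayed formula) only if $P(f_1',\rho')=W_\rho(t)^{-1}$, which does not hold in general. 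You should therefore verify this constant explicitly; doing so is a one-line check of consistency with the $N=1$ case, where $c(t,\la_r)=-p_1(\la_r;t)u_1(t)$.
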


\end{document}